\newcommand{\prob}{\mathbf{P}}
\newcommand{\esp}{\mathbf{E}}
\newcommand{\ud}{\text{\normalfont d}}
\newcommand{\fin}{
$\hfill
\mathbin{\vbox{\hrule\hbox{\vrule height1.5ex \kern.6em
\vrule height1.5ex}\hrule}}$}
\newtheorem{prop1}{Proposition}[section]
\newtheorem{lem1}[prop1]{Lemma}
\newtheorem{cor1}[prop1]{Corollary}
\newtheorem{theo}[prop1]{Theorem}
\newtheorem{hyps}[prop1]{Assumptions}
\newtheorem{rem}[prop1]{Remark}
\email{\vspace{-0.2cm}romain.azais@inria.fr}
\email{\vspace{-0.2cm}dufour@math.u-bordeaux1.fr}
\email{\vspace{-0.7cm}anne.petit@u-bordeaux2.fr}
\keywords{Piecewise-deterministic Markov process, ergodicity of Markov chains, nonparametric estimation, jump rate estimation, Nelson-Aalen estimator, asymptotic consistency}
\subjclass[2010]{Primary:  62M05, Secondary: 60J25}
\begin{document}

\title[Estimation of the distribution of the inter-jumping times for PDMP's]
{Nonparametric estimation of the conditional distribution of the inter-jumping times for piecewise-deterministic Markov processes}

\author{Romain Aza\"{\i}s}
\author{François Dufour}
\author{Anne Gégout-Petit}
\address{
INRIA Bordeaux Sud-Ouest, team CQFD, France and Universit\'e Bordeaux, IMB, CNRS UMR 5251,
351, Cours de la Libération, 33405 Talence cedex, France.
}

\thanks{This work was supported by ARPEGE program of the French National Agency of Research (ANR), project “FAUTOCOES”, number ANR-09-SEGI-004.}


\begin{abstract}
This paper presents a nonparametric method for estimating the conditional density associated to the jump rate of a piecewise-deterministic Markov process. In our framework, the estimation needs only one observation of the process within a long time interval. Our method relies on a generalization of Aalen's multiplicative intensity model. We prove the uniform consistency of our estimator, under some reasonable assumptions related to the primitive characteristics of the process. A simulation example illustrates the behavior of our estimator.
\end{abstract}

\maketitle

\section{Introduction}

This paper is devoted to the nonparametric estimation of the jump rate for piecewise-deterministic Markov processes, from only one observation of the process within a long time interval, under assumptions which ensure the ergodicity of an embedded chain. Our approach is based on methods investigated in the previous work of the authors \cite{Az12a} and a generalization of the well-known multiplicative intensity model, developed by Aalen in \cite{AalPHD,Aal77,Aal78} in the middle of the seventies.

Piecewise-deterministic Markov processes (PDMP's) have been introduced in the literature by Davis in \cite{Dav} as a general class of non-diffusion stochastic models. They are a family of Markov processes involving deterministic motion punctuated by random jumps, which occur either when the flow hits the boundary of the state space or in a Poisson-like fashion with nonhomogeneous rate. The path depends on three local characteristics namely the flow $\Phi$, the jump rate $\lambda$, which determines the interarrival times, and the transition kernel $Q$, which specifies the post-jump location. A suitable choice of the state space and the local characteristics $\Phi$, $\lambda$ and $Q$ provides stochastic models covering a large number of problems, for example in reliability (see \cite{Dav} and \cite{MR2528336}). Denote by $f$ the conditional density of the interarrival times associated to $\lambda$. This is a function of two variables: a spatial mark and time. The purpose of this paper is to develop a nonparametric procedure to estimate this function, when only one observation of the process within a long time is available. To the best of our knowledge, the nonparametric estimation of the conditional distribution of the interarrival times for this class of stochastic models has never been studied. Furthermore, this paper relies on \cite{Az12a} in which we focus on the nonparametric estimation of the jump rate and the cumulative rate for a class of non-homogeneous marked renewal processes. This class of stochastic models amounts to considering a particular piecewise-deterministic process, whose post-jump locations do not depend on interarrival times.

As counting processes may model a large variety of problems, nonparametric and semiparametric estimation methods have been developed by many authors for their statistical inference. The famous multiplicative intensity model has been extensively investigated by Aalen since 1975 (see \cite{AalPHD,Aal77,Aal78}). This model postulates the existence of a predictable process $Y$ and a deterministic function $\lambda$, called the jump rate or the hazard rate, such that the stochastic intensity of the underlying counting process $N$ is given by the product $Y\lambda$. In this context, Aalen provided a useful method for estimating the cumulative rate $\Lambda(t)=\int_0^t\lambda(s)\ud s$. The associated consistent estimator is now called the Nelson-Aalen estimator. In 1983, Ramlau-Hansen focused on smoothing this estimator by some kernel methods, in order to estimate directly the jump rate $\lambda$. He provided a nonparametric estimate of the rate $\lambda$ in \cite{Ram83}.

As mentioned before, a large number of estimation problems are related to the estimation of jump rates depending on both time and a spatial variable. The Nelson-Aalen estimator is proved to be well-adapted for a large variety of developments and applications (see the book \cite{And} and the references therein), in particular in survival analysis, or in statistics of processes. For example, one may apply Aalen's approach for estimating the jump rate of a marked counting process, whose state space is finite, from independent observations. More recently, Comte \textit{et al.} proposed in \cite{COMTE} a new strategy for the inference for counting processes in presence of covariates, under the multiplicative assumption.

Semiparametric estimation methods have been mainly investigated in presence of continuous covariates, beginning with Cox \cite{Cox72}. One may refer the reader to the book \cite{And} and the references therein for a large review of the literature on these models. There exists also an extensive literature on nonparametric approaches when the spatial mark takes its values in a continuous space. We do not attempt to present an exhaustive survey on this topic, but refer the interested reader to \cite{AalenHistory,And,MR676128,Martinussen2006} and the references therein for detailed expositions on these techniques. In particular, McKeague and Utikal proposed in 1990 a nonparametric estimator of the jump rate when the covariate belongs to $[0,1]$ (see \cite{McK90}). Their approach is based on a generalization of Aalen's multiplicative model. It consists in smoothing a Nelson-Aalen type estimator both in spatial and time directions. The authors demonstrated the uniform convergence in probability of their estimator. Li and Doss extended in turn McKeague and Utikal's work for the multidimensional case in \cite{MR1345201}. This paper relies on a local linear fit in the spatial direction. Their theoretical results concern the weak convergence of the proposed estimators. The interested reader may also consult the papers written by Utikal \cite{MR1208878,MR1445041}. These two papers deal with the nonparametric estimation of the jump rate for two special classes of marked counting processes, observed within a long time, under some continuous-time martingale assumptions. The Euclidean structure of the covariate state space plays a key role in the papers mentioned above. At the same time, the nonparametric approach has been considered by Beran \cite{BERAN}, Stute \cite{MR840519} and Dabrowska \cite{MR932943}, but in the independent and identically distributed case.

An inherent difficulty throughout the paper is related to the presence of forced jumps, when the process reaches the boundary of the state space. This feature has been introduced by Davis in \cite{Dav} and is very attractive to model lots applications. For instance, one may refer the reader to a capacity expansion model introduced by Davis \textit{et al.} in \cite{Dav2}. In \cite{subtilin}, the authors develop a PDMP to capture the mechanism behind antibiotic released by bacteria \textit{B. subtilis}. Forced jumps are used to model a deterministic switching in the mode when the concentration of nutrients rises over a certain threshold. For applications in reliability, one may find in \cite{MR1679540} an example of shock models with failures of threshold type. An other application may be found in \cite{DeS}, where the authors focus on the optimal stopping for a PDMP modeling the state of a metallic structure subject to corrosion. In the book \cite{Jacobsen}, the authors derive likelihood processes for observation of PDMP's without forced jumps. For this class of models without boundary jumps, this approach could lead to an estimation procedure in the parametric case. We choose an other procedure which is well adapted in our nonparametric framework and for PDMP's with forced jumps.

Our approach relies on our previous work \cite{Az12a} and a generalization of Aalen's multiplicative model. The main difficulty is related to the dependence of the transition kernel on the previous interarrival time. This excludes the techniques developed in the literature \cite{Az12a,MR1345201,McK90,MR1208878,MR1445041} for estimating the jump rate $\lambda$. The keystone of the present paper is to consider the Markov chain $(Z_n,Z_{n+1},S_{n+1})_{n\geq0}$, where the $Z_n$'s denote the post-jump locations of the process, and the $S_n$'s denote the interarrival times. The main idea in this work is to deal with the conditional distribution of $S_{n+1}$ given $Z_n$ and $Z_{n+1}$. We prove that this conditional distribution admits a jump rate $\widetilde{\lambda}$, under some regularity conditions on the primitive data of the process (see Proposition \ref{txdesaut}). Furthermore, a conditional independence result is satisfied by the discrete-time process $(Z_n,Z_{n+1},S_{n+1})_{n\geq0}$ (see Proposition \ref{mozart2}). In this context, we focus on the estimation of the jump rate $\widetilde{\lambda}$, which is a function of three variables: two spatial marks and time. Moreover, the two spatial variables take their values on a general metric space. In particular, this rules out the procedures investigated by the authors mentioned above \cite{MR1345201,McK90,MR1208878,MR1445041}. As a consequence, our method consists in involving a thin partition of the state space. We take a leaf out of a few proofs of our previous work \cite{Az12a} for estimating the function $\widetilde{l}(A,B,t)$, which is an approximation of the jump rate $\widetilde{\lambda}(x,y,t)$, for $x\in A$ and $y\in B$. In the rest of the paper, we use the convergence property of this estimator to tackle the estimation problem of the density of interest $f$. We study the connection between $\widetilde{\lambda}$ and the conditional density $f$ (see Proposition \ref{approxf}). This link involves the conditional probability $\prob_{\nu}(S_1>t,Z_1\in B|Z_0\in A)$, where $\nu$ denotes the invariant measure of the process $(Z_n)_{n\geq0}$. An efficient estimate of this quantity is presented in Proposition \ref{unifps}. We provide a nonparametric estimator of the conditional density $f$, and we prove a result of uniform convergence in Theorem \ref{pdmp:theo:fin}. We ensure the consistency of our estimator, under interpretable and reasonable conditions related to the primitive characteristics of the process.


The paper is organized as follows. Section \ref{sec:problem} is devoted to the precise formulation of our problem. We first recall, in Subsection \ref{sub:def_pdmp}, the definition of a PDMP. In Subsection \ref{sub:results}, we provide the assumptions that we need in the sequel. Next, in Subsection \ref{sub:main}, we give the main steps in the estimation of the conditional density of interest (see Propositions \ref{approxf}, \ref{estimationltilde} and \ref{unifps}). Our main result of consistency is presented in Theorem \ref{pdmp:theo:fin}. In Section \ref{sec:preliminary}, we derive some preliminary results. We introduce a new process in Subsection \ref{sub:lambdatilde} and we focus on the existence of the conditional jump rate $\widetilde{\lambda}$ of this Markov process (see Proposition \ref{txdesaut}). In Subsection \ref{sub:ergo}, we state some ergodicity results, which will be useful in order to prove the uniform consistency of our estimator. Section \ref{sec:proofs} is devoted to most of the proofs of our major results given in Subsection \ref{sub:results}. Finally, a numerical example is given in Section \ref{s:simu} for illustrating the good behavior of our estimator. The technical proof of a conditional independence result (Proposition \ref{mozart2}) is deferred in Appendix \ref{appendixA}.

\section{Problem formulation}
\label{sec:problem}

This section is devoted to the definition of a piecewise-deterministic Markov process. Moreover, we present also the required assumptions and our main results.

\subsection{Definition of a PDMP}
\label{sub:def_pdmp}

Here, we focus on the definition of a piecewise-deterministic Markov process on a separable metric space. The process evolves in an open subset $E$ of a separable metric space $(\mathcal{E},d)$. The motion is defined by the three local characteristics $(\lambda,Q,\Phi)$.
\begin{itemize}
	\item $\Phi : E\times \mathbf{R}_+ \to \overline{E}$ is the deterministic flow. It satisfies,
		$$\forall \xi \in E,~\forall s,t \geq 0,~ \Phi(\xi,t+s) = \Phi(   \Phi(\xi,t) , s ) .$$
		For each $\xi\in E$, $t^\star(\xi)$ denotes the deterministic exit time from $E$:
		$$ t^\star(\xi) = \inf \{t>0~:~\Phi(\xi,t) \in \partial E\},$$
		with the usual convention $\inf\emptyset = +\infty$.
	\item $\lambda: \overline{E}\to\mathbf{R}_+$ is the jump rate. It is a measurable function which satisfies,
		$$\forall\xi\in E,~\exists \varepsilon>0,~ \int_0^\varepsilon \lambda\big( \Phi(\xi,s) \big) \ud s < +\infty .$$
	\item $Q$ is a Markov kernel on $(\overline{E},\mathcal{B}(\overline{E}))$ which satisfies,
		$$\forall \xi\in\overline{E},~Q(\xi,\overline{E}\setminus\{\xi\})=1 \quad\text{and}\quad Q(\xi,E)=1.$$
\end{itemize}

\noindent
There exists a filtered probability space $(\Omega,\mathcal{A},(\mathcal{F}_t)_{t\geq0},\prob_{\nu_0})$, on which the process $(X_t)_{t\geq 0}$ is defined (see \cite{Dav}). The probability distribution of the initial value $X_0$ is $\nu_0$. Starting from $x\in E$, the motion can be described as follows. $T_1$ is a positive random variable whose survival function is,
$$ \forall t \geq 0, ~ \mathbf{P}_{\nu_0}( T_1 > t|X_0=x) = \exp \left(  - \int_0^t \lambda( \Phi(\xi,s) ) \ud s \right) \mathbf{1}_{\{ 0 \leq t < t^\star(x) \}} .$$
This jump time occurs either when the flow reaches the boundary of the state space $E$ at time $t^\star(x)$ or in a Poisson-like fashion with rate $\lambda$ before. One chooses an $E$-valued random variable $Z_1$ according to the distribution $Q(\Phi(\xi,T_1),\cdot)$. Let us remark that the post-jump location depends on the interarrival time $T_1$. The trajectory between the times $0$ and $T_1$ is given by
\begin{displaymath}
X_t = \left\{ 
\begin{array}{cl}
\Phi(x,t) 	& \text{for $0\leq t < T_1$,} \\
Z_1		& \text{for $t=T_1$.}
\end{array}
\right.
\end{displaymath}
Now, starting from $X_{T_1}$, one selects the time $S_2 = T_2-T_1$ and the post-jump location $Z_2$ in a similar way as before, and so on. This gives a strong Markov process with the $T_k$'s as the jump times (with $T_0=0$). One often considers the embedded Markov chain $(Z_n,S_n)_{n\geq0}$ associated to the process $(X_t)_{t\geq 0}$ with $Z_n=X_{T_n}$, $S_n=T_n-T_{n-1}$ and $S_0=0$, that is, the $Z_n$'s denote the post-jump locations of the process, and the $S_n$'s denote the interarrival times.

On the strength of \cite{Dav} (Chapter 1, Section 24 Definition of the PDP), the embedded chain $(Z_n,S_n)_{n\geq0}$ is generated by a stochastic dynamic system. There exist two measurable functions $\varphi$ and $\psi$, and two independent random sequences $(\varepsilon_n)_{n\geq0}$ and $(\delta_n)_{n\geq0}$, such that, for any $n\geq1$,
\begin{equation} \label{pdmp:dyn2}
\left\{
\begin{array}{l l l}
S_n &=& \varphi(Z_{n-1},\delta_{n-1}) ,\\
Z_n &=& \psi(Z_{n-1},S_n,\varepsilon_{n-1}) .
\end{array}
\right.
\end{equation}
%

\noindent
For a matter of readability, we introduce the following notations,
\begin{displaymath}
\begin{array}{lllccc}
\forall\xi\in E,&\forall t\geq0,&~&\overline{\lambda}(\xi,t) &=& \lambda\big(\Phi(\xi,t)\big) ,\\
\forall\xi\in E,&\forall t\geq0,&\forall A\in\mathcal{B}(E),&\overline{Q}(\xi,t,A) &=&Q\big(\Phi(\xi,t),A) .
\end{array} 
\end{displaymath}
In all the sequel, let us denote by $f$ and $G$ the probability density function and the survival function associated to the jump rate $\overline{\lambda}$.
\begin{eqnarray*}
\forall\xi\in E,~\forall t\geq0,~G(\xi,t) &=&\exp\left(-\int_0^t\overline{\lambda}(\xi,s)\ud s\right) ,\\
\forall\xi\in E,~\forall t\geq0,~f(\xi,t)&=&\overline{\lambda}(\xi,t)G(\xi,t) .
\end{eqnarray*}
The purpose of this paper is to provide a nonparametric estimate of the conditional density $f$. We shall prove the consistency of the estimator from some assumptions related to the characteristics of the process.

\subsection{Assumptions}
\label{sub:results}

		In this part, we present all the assumptions that we need in the sequel. They may be classified into two parts: the assumptions on the transition kernel, and some assumptions of regularity. Let us denote by $\mathcal{C}_1$ the following set,
		$$\mathcal{C}_1 = \left\{B\in\mathcal{B}(E)~:~\stackrel{\circ}{B}\neq\emptyset\right\}.$$
		In addition, $\mathcal{C}_2$ is defined by
		$$\mathcal{C}_2 = \left\{A\in\mathcal{C}_1~:~A\,\text{relatively compact set such that $\overline{A}\cap\partial E = \emptyset$}\right\} .$$
		
		\begin{hyps} \label{hyps1}
		\textbf{Assumptions on the transition kernel.}
		\vspace{-0.3cm}
		\begin{enumerate}[a)]
		
			\item	The transition kernel $\overline{Q}$ may be written in the following way,
		$$ \forall \xi\in E,~\forall s\geq0,~\forall B\in\mathcal{B}(E),~\overline{Q}(\xi,s,B) = \int_B \widetilde{Q}(\xi,s,y) \mu(\ud y),$$
		where $\mu$ is an auxiliary measure on $(E,\mathcal{B}(E))$ such that, for any measurable set $B$ with non-empty interior, $\mu(B)>0$.

			\item For any $x,y\in E$, $\widetilde{Q}(x,\cdot,y)$ is a continuous function.

			\item There exists $m>0$ such that,		
				\begin{equation}	\label{minQ}
					\forall x\in E,~\forall s\geq0,~\forall y\in E,~\widetilde{Q}(x,s,y) \geq m .
				\end{equation}	

			\item 
			For any $e>0$, there exist $p\geq 2$ and a family of non-overlapping sets $\{B_1,\dots,B_p\}$, $B_k\in\mathcal{C}_1$ for any $k$, such that
			$$\max_{1\leq k\leq p} \text{\normalfont diam}~\!B_k<e \qquad \text{and} \qquad \forall x\in E,~\forall t\geq0,~\sum_{k=1}^p \overline{Q}(x,t,B_k) = 1.$$
			Without loss of generality, $B_1$ is assumed to be the set with the largest diameter.
			
		\end{enumerate}
		\end{hyps}

		\noindent
		If $E$ is bounded, the last point of Assumptions \ref{hyps1} is obviously satisfied. For the sake of clarity, let us introduce the following notations,
		\begin{eqnarray}
		\forall \xi\in E,	~\forall s\geq0,~\forall y\in E,~	G\widetilde{Q}(\xi,s,y) 	&=&		G(\xi,s)\widetilde{Q}(\xi,s,y) ,		\label{pdmp:def:GQtilde}\\
		\forall \xi \in E,	~\forall s\geq0,~\forall y\in E,~	f\widetilde{Q}(\xi,s,y) 	&=& 	f(\xi,s)\widetilde{Q}(\xi,s,y) .			\nonumber
		\end{eqnarray}
		Furthermore, we denote also,
		\begin{eqnarray}
		\forall \xi\in E,~\forall s\geq0,~\forall B\in\mathcal{B}(E),~G\overline{Q}(\xi,s,B) 	&=&	G(\xi,s)\overline{Q}(\xi,s,B) , \label{pdmp:def:GQbarre}\\
		\forall \xi\in E,~\forall s\geq0,~\forall B\in\mathcal{B}(E),~f\overline{Q}(\xi,s,B) 	&=& 	f(\xi,s)\overline{Q}(\xi,s,B) . \label{pdmp:def:fQbarre}
		\end{eqnarray}

		\begin{hyps} \label{hyps2}
		\textbf{Assumptions of regularity.} 
		\vspace{-0.3cm}
		\begin{enumerate}[a)]
		
			\item $f(\xi,\cdot)$ is a strictly positive and continuous function for any $\xi\in E$. 				
			
			\item There exists a locally integrable function $M:\mathbf{R}_+\to\mathbf{R}_+$ such that,		
			$$\forall \xi\in E,~\forall t\geq0,~\overline{\lambda}(\xi,t)\leq M(t) .$$
		
			\item 
			$t^\star:E\to\mathbf{R}_+$ is a bounded function.
			
			\item There exists a constant $[f\widetilde{Q}]_{Lip}>0$ such that, for any $x,y,u,v\in E$ and for any $0\leq t<t^\star(x)\wedge t^\star(u)$, 	 
			\begin{equation}
				\Big| f\widetilde{Q}(x,t,y) - f\widetilde{Q}(u,t,v)\Big| \leq [f\widetilde{Q}]_{Lip} d_2\big((x,y),(u,v)\big) .	\label{fQlip}
			\end{equation}
		
			\item There exists a constant $[f]_{Lip}>0$ such that,		
				$$\forall x,y \in E,~\forall t\geq0,~\big|f(x,t)-f(y,t)\big| \leq [f]_{Lip} d(x,y) .$$


			\item There exists a constant $[G\widetilde{Q}]_{Lip}>0$ such that, for any $x,y,u,v\in E$,		
			\begin{equation}
				\big|G\widetilde{Q}(x,t^\star(x),y) - G\widetilde{Q}(u,t^\star(u),v)\big| \leq [G\widetilde{Q}]_{Lip} d_2\big((x,y),(u,v)\big) .
				\label{GQlip}
			\end{equation}

		\end{enumerate}
		\end{hyps}		

\subsection{Main results}

\label{sub:main}

		In this subsection, we consider a set $A\in\mathcal{C}_2$ and $e>0$. This induces the existence of $p\geq 2$ and a family $\{B_1,\dots,B_p\}$ depending on $e$ and satisfying the conditions of Assumptions \ref{hyps1}. First of all, we provide a technical but required lemma.		
		
		\begin{lem1}
		We have $\displaystyle\inf_{\xi\in A} t^\star(\xi)>0$. $t^\star(A)$ denotes this quantity.
		\end{lem1}

		\begin{proof}
		One may refer to the proof of Lemma 4.7 given in \cite{Az12a}.
		\end{proof}

Our strategy for estimating the conditional probability density function $f$ consists in the introduction of two functions $\widetilde{l}(A,B_k,t)$ and $\widetilde{H}(A,B_k,t)$, for $0\leq t<t^\star(A)$. They provide a way to approximate the density of interest $f(\xi,t)$ for $\xi\in A$. Indeed, $f(\xi,t)$ is close to
$$\sum_{k=1}^p \widetilde{l}(A,B_k,t) \widetilde{H}(A,B_k,t)$$
(see Proposition \ref{approxf}). The function $\widetilde{H}$ will be defined in $(\ref{eq:def:Htilde})$, while the definition of $\widetilde{l}$ will be provided in Lemma \ref{defltilde}. One may give an interpretation of these two functions. First, $\widetilde{l}(A,B_k,t)$ is an approximation of the jump rate $\widetilde{\lambda}(x,y,t)$ of $S_{n+1}$ given $Z_n=x$ and $Z_{n+1}=y$ under the stationary regime at time $t$, for $x$ in $A$ and $y$ in $B_k$. The existence of the function $\widetilde{\lambda}$ is established in Subsection \ref{sub:lambdatilde}. Roughly speaking, the quantity $\widetilde{l}(A,B_k,t)$ may be seen as the jump rate from $A$ to $B_k$ at time $t$ and under the stationary regime. Furthermore, we will establish that $\widetilde{H}(A,B_k,t)$ is exactly the conditional probability $\prob_{\nu}(S_1>t,Z_1\in B_k | Z_0\in A)$ where $\nu$ denotes the limit distribution of the embedded Markov chain $(Z_n)_{n\geq0}$. In this context, a natural estimator of $f(\xi,t)$ is given by
$$\widehat{f}_n(A,t) =  \sum_{k=1}^p \widehat{\widetilde{l}}_{n}(A,B_k,t)\, \widehat{p}_n(A,B_k,t) ,$$
where $\widehat{\widetilde{l}}_n(A,B_k,t)$ estimates $\widetilde{l}(A,B_k,t)$, while $\widehat{p}_n(A,B_k,t)$ is an estimator of $\widetilde{H}(A,B_k,t)$. Both these estimators will be given in $(\ref{estimateurltilde})$ and $(\ref{pdmp:def:estpn})$.

Now, we present our main results in a more precise context. The proofs of most of the results provided in this subsection are deferred into Section \ref{sec:proofs}. One considers the distance $d_2$ on $\mathcal{E}^2$ induced by the distance $d$ and the Manhattan norm on $\mathbf{R}^2$. For any $(x,y),(u,v)\in\mathcal{E}^2$,
$$ d_2\big( (x,y) , (u,v)\big) =d(x,u)+d(y,v) .$$
$\text{\normalfont diam}_2~\!A\times B_k$ denotes the diameter of $A\times B_k\subset\mathcal{E}^2$ with respect to this distance.

		\begin{prop1} \label{approxf}
		Let $\xi\in A$. For any $0\leq t<t^\star(A)$,
		$$ \left|f(\xi,t) - \sum_{k=1}^p \widetilde{l}(A,B_k,t) \widetilde{H}(A,B_k,t)\right| \leq \text{Cst}_1\,\text{\normalfont diam}~\!A + \text{Cst}_2\,\text{\normalfont diam}_2\,A\times B_1.$$
		\end{prop1}
		
		\begin{proof}
		Subsection \ref{sub:approxf} is dedicated to this proof.
		\end{proof}

		

		%
		On the one hand, we focus on the estimation of $\widetilde{l}(A,B_k,t)$. We consider the continuous-time processes $N_n(A,B_k,\cdot)$ and $Y_n(A,B_k,\cdot)$ defined on $[0,t^\star(A)[$ by
		\begin{equation}
		\label{Nn}
		N_n(A,B_k,t) =  \sum_{i=0}^{n-1} \mathbf{1}_{\{S_{i+1}\leq t\}}\mathbf{1}_{\{Z_i\in A\}}\mathbf{1}_{\{Z_{i+1}\in B_k\}} ,
		\end{equation}
		and
		\begin{equation}
		\label{yn}
		Y_n(A,B_k,t) =  \sum_{i=0}^{n-1} \mathbf{1}_{\{S_{i+1}\geq t\}}\mathbf{1}_{\{Z_i\in A\}}\mathbf{1}_{\{Z_{i+1}\in B_k\}} .
		\end{equation}
		$Y_n(A,B_k,t)^+$ denotes the generalized inverse of $Y_n(A,B_k,t)$. It is given by
		\begin{equation}	\label{pdmp:def:ynplus}
		Y_n(A,B_k,t)^+ = 
		\left\{
		\begin{array}{cll}
			0 &\text{if}&Y_n(A,B_k,t) =0 ,\\
			\frac{1}{Y_n(A,B_k,t)} &\text{else.}&
		\end{array}
		\right.
		\end{equation}		
		The estimator of the function $\widetilde{l}(A,B_k,\cdot)$ that we provide is obtained by kernel methods. Let $K$ be a continuous kernel whose support is $[-1,1]$, $b>0$ and $0<t<t^\star(A)$. Our estimator is defined, for any time $s$ between $0$ and $t$ by
		\begin{equation}
		\label{estimateurltilde}
		\widehat{\widetilde{l}}_{n,b,t}(A,B_k,s) = \frac{1}{b} \sum_{k=0}^{n-1} K\left( \frac{s - S_{i+1}}{b}\right) Y_n(A,B_k,S_{i+1})^+\mathbf{1}_{\{S_{i+1}\leq t\}}\mathbf{1}_{\{Z_i\in A\}} \mathbf{1}_{\{Z_{i+1}\in B_k\}} .
		\end{equation}
		We have the following result of convergence.
		\begin{prop1}				\label{estimationltilde}																	
		Let $0<r_1<r_2<t<t^\star(A)$ and $x\in E$. There exists a sequence $(\beta_n(A,B_k))_{n\geq0}$ (depending on $t$) which almost surely tends to $0$, such that
		$$ \sup_{r_1\leq s\leq r_2} \left|\widehat{\widetilde{l}}_{n,\beta_n(A,B_k),t}(A,B_k,s) -  \widetilde{l}(A,B_k,s)\right| \stackrel{\prob_x~}{\longrightarrow} 0~\text{when $n\to+\infty$}.$$
		\end{prop1}
		
		\begin{proof}
		The reader may find in Subsection \ref{sub:ltilde} some properties of $\widetilde{l}$ and the proof of this proposition.
		\end{proof}

		%
		On the other hand, we estimate the quantity $\widetilde{H}(A,B_k,t)$ by its empirical version $\widehat{p}_n(A,B_k,t)$ given by
		\begin{equation} \label{pdmp:def:estpn}
		\widehat{p}_n(A,B_k,t) = \frac{  \sum_{i=0}^{n-1}  \mathbf{1}_{\{S_{i+1}>t\}} \mathbf{1}_{\{Z_{i+1}\in B_k\}} \mathbf{1}_{\{Z_i\in A\}} }{ \sum_{i=0}^{n-1}  \mathbf{1}_{\{Z_i\in A\}} } .
		\end{equation}
		We shall prove the following result of consistency.
		\begin{prop1}	\label{unifps}
		Let $0<t<t^\star(A)$. For any $x\in E$, we have when $n$ goes to infinity,
		$$
\sup_{0\leq s\leq t} \left| \widehat{p}_n(A,B_k,s)- \widetilde{H}(A,B_k,t) \right|  \to 0 ~\prob_{x}\textit{-a.s.}
		$$
		\end{prop1}
		
		\begin{proof}
		The proof may be found in Subsection \ref{sub:Htilde}.
		\end{proof}

		\noindent
		In the light of foregoing, we estimate $f(\xi,s)$, with $\xi\in A$ and $0\leq s\leq t<t^\star(A)$,by
		$$\widehat{f}_n(A,s) =  \sum_{k=1}^p \widehat{\widetilde{l}}_{n,\beta_n(A,B_k),t}(A,B_k,s)\, \widehat{p}_n(A,B_k,s) .$$
		The dependence in $t$ and $(\beta_n(A,B_k))_{n\geq0}$ is implicit for the sake of clarity. Our main result of convergence lies in the following theorem.

\begin{theo}
\label{pdmp:theo:fin}
Let $\mathcal{K}$ be a compact subset of $E$ and $\xi\in E$. For any $\epsilon,\eta>0$, there exist an integer $N$ and a finite partition $P=(A_l)$ of $\mathcal{K}$ such that, for any $0<t<\min_l t^\star(A_l)$, there exists for each couple $(l,k)$, a sequence $(\beta_n(A_l,\beta_k))_{n\geq0}$ (depending on $t$), which almost surely tends to $0$, such that for any $n\geq N$, for any $0<r_1<r_2<t$,
$$
\prob_{\xi} \left( \sup_{x\in\mathcal{K}}\sup_{r_1\leq s\leq r_2} \left| \sum_{l} \widehat{f}_{n}(A_l,t)  \mathbf{1}_{\{x\in A_l\}} - f(x,s) \right| > \eta\right) <\epsilon .
$$
\end{theo}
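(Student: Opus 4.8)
The plan is to combine the three building-block results (Propositions \ref{approxf}, \ref{estimationltilde} and \ref{unifps}) and control the error by a triangle-inequality decomposition, the only genuinely new work being the uniformization over a compact set $\mathcal{K}$ and the choice of the partition. First I would fix $\epsilon,\eta>0$. Since $\mathcal{K}$ is a compact subset of $E$ and $E$ is open, $\mathcal{K}$ is covered by finitely many sets of $\mathcal{C}_2$; applying Assumptions \ref{hyps1}(d) to each with a small enough mesh parameter $e$, I would produce a finite partition $P=(A_l)$ of $\mathcal{K}$ with $A_l\in\mathcal{C}_2$, together with, for each $l$, a family $(B^l_k)_{1\le k\le p_l}$ as in Assumptions \ref{hyps1}(d). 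The parameter $e$ (hence $\max_l\text{diam}\,A_l$ and $\max_{l,k}\text{diam}_2\,A_l\times B^l_1$) is chosen so small that the right-hand side of Proposition \ref{approxf} is $<\eta/3$ for every $l$ and every $0\le t<t^\star(A_l)$; this uses Assumptions \ref{hyps2}(d)--(f) through the constants $\text{Cst}_1,\text{Cst}_2$, which I would need to check are bounded uniformly in the partition (they depend only on the Lipschitz constants and on bounds for $M$, $t^\star$, and $m$, all uniform). For $x\in\mathcal{K}$ there is exactly one $l=l(x)$ with $x\in A_l$, so $\sum_l\widehat f_n(A_l,t)\mathbf 1_{\{x\in A_l\}}=\widehat f_n(A_{l(x)},t)$ and likewise the deterministic approximant picks out the $l(x)$ term; hence the error splits, uniformly in $x$ and in $s\in[r_1,r_2]$, as
$$
\Big|\widehat f_n(A_{l(x)},t)-f(x,s)\Big|\le \Big|\widehat f_n(A_{l(x)},t)-\sum_{k}\widetilde l(A_{l(x)},B^{l(x)}_k,s)\widetilde H(A_{l(x)},B^{l(x)}_k,t)\Big| + \text{(approximation error)} + \text{(time-oscillation error)},
$$
and since $P$ is \emph{finite} it suffices to handle each fixed pair $(l,k)$ and then take a union bound over the finitely many $l$.

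Next I would estimate the stochastic part. For each pair $(l,k)$, Proposition \ref{estimationltilde} furnishes a bandwidth sequence $(\beta_n(A_l,B^l_k))_{n\ge0}$, tending a.s.\ to $0$, with $\sup_{r_1\le s\le r_2}|\widehat{\widetilde l}_{n,\beta_n,t}(A_l,B^l_k,s)-\widetilde l(A_l,B^l_k,s)|\to0$ in $\prob_\xi$-probability; Proposition \ref{unifps} gives $\sup_{0\le s\le t}|\widehat p_n(A_l,B^l_k,s)-\widetilde H(A_l,B^l_k,t)|\to0$ $\prob_\xi$-a.s., hence in probability. Writing each product $\widehat{\widetilde l}_n\,\widehat p_n-\widetilde l\,\widetilde H=(\widehat{\widetilde l}_n-\widetilde l)\widehat p_n+\widetilde l(\widehat p_n-\widetilde H)$ and using that $\widehat p_n\le1$ and that $\widetilde l(A_l,B^l_k,\cdot)$ is bounded on $[r_1,r_2]$ (a property of $\widetilde l$ established alongside Proposition \ref{estimationltilde}, itself a consequence of Assumption \ref{hyps2}(b) bounding $\overline\lambda$ by $M$), the product error is $o_{\prob_\xi}(1)$ uniformly in $s\in[r_1,r_2]$. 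Summing over $k=1,\dots,p_l$ and then over the finitely many $l$, I obtain a single integer $N$ such that for all $n\ge N$ the event $\{\sup_x\sup_s(\text{stochastic part})>\eta/3\}$ has $\prob_\xi$-probability $<\epsilon$; the union bound over finitely many $(l,k)$ is what makes this uniform.

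It remains to kill the time-oscillation error, i.e.\ to pass from $\sum_k\widetilde l(A_l,B^l_k,s)\widetilde H(A_l,B^l_k,t)$ (with $s$ in the running variable but $t$ frozen) to $f(x,s)$, which Proposition \ref{approxf} compares to $\sum_k\widetilde l(A_l,B^l_k,s)\widetilde H(A_l,B^l_k,s)$. Thus I need $|\widetilde H(A_l,B^l_k,t)-\widetilde H(A_l,B^l_k,s)|$ small; since $\widetilde H(A_l,B^l_k,\cdot)=\prob_\nu(S_1>\cdot,Z_1\in B^l_k|Z_0\in A_l)$ is monotone and continuous in its argument (by Assumption \ref{hyps2}(a), $f(\xi,\cdot)>0$ is continuous, so $G(\xi,\cdot)$ has no atoms and the integrated survival function is continuous), one takes $t$ close enough to $[r_1,r_2]$ — concretely, shrinking $t$ or, if needed, observing the statement allows any $0<r_1<r_2<t$ and any $0<t<\min_l t^\star(A_l)$, so one may simply require the oscillation of each of the finitely many $\widetilde H(A_l,B^l_k,\cdot)$ over $[r_1,t]$ to be $<\eta/(3p_l\|\widetilde l\|_\infty)$. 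I expect the main obstacle to be precisely the \emph{bookkeeping of uniformity}: verifying that the constants $\text{Cst}_1,\text{Cst}_2$ of Proposition \ref{approxf} and the bounds $\|\widetilde l(A_l,B^l_k,\cdot)\|_\infty$ do not blow up as the partition is refined, and interleaving the two separate limits (in $n$, for the $\widehat{\widetilde l}_n$ and $\widehat p_n$ pieces) with the two separate ``small'' parameters (the mesh $e$ controlling Proposition \ref{approxf}, and the gap $t-r_2$ controlling the $\widetilde H$ oscillation) in the right order: first choose $e$ and the partition from $\eta$, then — the partition now being fixed and finite — choose $N$ from $\epsilon$ and $\eta$. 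Everything else is a routine triangle inequality plus a finite union bound.
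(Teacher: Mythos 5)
Your overall architecture is exactly the paper's: triangle inequality splitting the error into a stochastic part (handled by Propositions \ref{estimationltilde} and \ref{unifps}, summed over the finitely many pairs $(l,k)$) and a deterministic approximation part (handled by Proposition \ref{approxf} with a sufficiently thin partition), and your uniformity worries are harmless since $\text{Cst}_1=[f]_{Lip}$ and $\text{Cst}_2=\frac{1}{m}[\widetilde{\lambda}]_{Lip}\|H\|_{\infty}$ do not depend on the partition, while $\widetilde{l}$ is uniformly bounded by $\|f\|_{\infty}\|\widetilde{Q}\|_{\infty}/m_2$ (Lemma \ref{lambdabounded}). The only point where you depart from the paper is the third, ``time-oscillation'' step, and there your argument as written does not go through: since $t$ and $0<r_1<r_2<t$ are \emph{universally} quantified after $P$ and $N$ have been fixed, you are not entitled to ``require'' the oscillation of $\widetilde{H}(A_l,B_k,\cdot)$ over $[r_1,t]$ to be small, nor to shrink $t$ towards $r_2$; in general $|\widetilde{H}(A_l,B_k,t)-\widetilde{H}(A_l,B_k,s)|$ is not small for $s\in[r_1,r_2]$.

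The good news is that this extra term is an artefact of a typo you took at face value: the proof of Proposition \ref{unifps} actually establishes $\sup_{0\leq s\leq t}\big|\widehat{p}_n(A,B_k,s)-\widetilde{H}(A,B_k,s)\big|\to 0$ $\prob_x$-a.s., i.e.\ uniform convergence to $\widetilde{H}$ evaluated at the running time $s$ (and, consistently, the quantity $\widehat{f}_n(A_l,t)$ in the theorem should be read as $\widehat{f}_n(A_l,s)$; the paper's own proof compares the estimator with $\sum_{k}\widetilde{l}(A_l,B_k,s)\widetilde{H}(A_l,B_k,s)$). Once this is recognized, your third step disappears entirely, your product decomposition $(\widehat{\widetilde{l}}_n-\widetilde{l})\widehat{p}_n+\widetilde{l}(\widehat{p}_n-\widetilde{H})$ together with the finite union bound gives the stochastic part, and your proof coincides with the paper's. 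So: same route, correct in substance, with one unnecessary step whose proposed justification would not be valid if it were actually needed.
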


\begin{proof}
Let $(A_l)$ a partition of $\mathcal{K}$. Let us fix $l$. Using Proposition \ref{estimationltilde} and Proposition \ref{unifps}, there exists a family of sequences $\{(\beta_n(A_l,B_k))_{n\geq0}\}_{1\leq k\leq p}$, such that
$$
\sup_{r_1\leq s\leq r_2} \left| \sum_{k=1}^p \widehat{\widetilde{l}}_{n,\beta_n(A_l,B_k),t}(A_l,B_k,s) \widehat{p}_n(A_l,B_k,s) - \sum_{k=1}^p \widetilde{l}(A_l,B_k,s) \widetilde{H}(A_l,B_k,s)\right| \stackrel{\prob_{\xi}~}{\longrightarrow} 0 ,
$$
when $n$ goes to infinity. In addition, on the strength of Proposition \ref{approxf}, for any $x\in\mathcal{K}$, the distance
$$
\sup_{r_1\leq s\leq r_2}\left|f(x,s) - \sum_{l=1}^{|P|}\mathbf{1}_{\{x\in A_l\}} \sum_{k=1}^p \widetilde{l}(A_l,B_k,s) \widetilde{H}(A_l,B_k,s) \right|
$$
is arbitrarily small, since one may choose a thin enough partition $(A_l)$ and thanks to Assumptions \ref{hyps1}. The triangle inequality immediately ensures the result.
\end{proof} 

\noindent
In the previous theorem, if the compact subset $\mathcal{K}$ is close to the state space $E$, then the lower bound of the $t^\star(A_k)$'s is small, for any partition $(A_k)$. Therefore, one estimates the conditional density $f$ on a large part of $E$, but within a small time interval. Conversely, if $\mathcal{K}$ is chosen centered in $E$, one may estimate $f$ on a small part of the state space, but within a long time.

\section{Preliminary discussion}
\label{sec:preliminary}

\subsection{Conditional distribution of $S_{n+1}$ given $Z_n,Z_{n+1}$}
\label{sub:lambdatilde}

We shall state that the conditional distribution of $S_{n+1}$ given $Z_n$ and $Z_{n+1}$ admits a jump rate (see Proposition \ref{txdesaut}). This result directly induces a corollary about the compensator of the counting process $N^{n+1}$, given for any $t\geq0$ by
$$N^{n+1}(t) = \mathbf{1}_{\{S_{n+1}\leq t\}},$$
in a particular filtration (see Corollary \ref{cumparsita}). First, we shall establish that $G(x,t^\star(x))$ and $G\widetilde{Q}(x,t^\star(x),y)$ are two strictly positive numbers for any $x,y\in E$.
		
		\begin{rem}Let $x,y\in E$. On the one hand, we have
		\begin{equation}													\label{pdmp:minoG}
		G\big(x,t^\star(x)\big) \geq \exp\left( - \int_0^{t^\star(x)} M(s)\ud s \right) > 0 ,
		\end{equation}
		because $M$ is a locally integrable function. On the other hand, from equations $(\ref{minQ})$ and $(\ref{pdmp:def:GQtilde})$, we have
		\begin{equation}
		G\widetilde{Q}(x,t^\star(x),y) \geq m G\big(x,t^\star(x)\big) > 0 .					\label{pdmp:minoGQ}
		\end{equation}
		\end{rem}
	
	\noindent
	Let us denote by $R$ the transition kernel of the Markov chain $(Z_n)_{n\geq0}$. In the following remark, we especially give an explicit formula for $R$.
		
		\begin{rem}							\label{pdmp:rem:R}
		The kernel $R$ may be written in the following way \cite[(34.12), page 116]{Dav}, for any $x\in E$, $B\in\mathcal{B}(E)$,
		\begin{equation}		\label{pdmp:exprR}
		R(x,B) = \int_0^{t^\star(x)} f\overline{Q}(x,s,B) \ud s + G\overline{Q}(x,t^\star(x),B),
		\end{equation}
		where the functions $G\overline{Q}$ and $f\overline{Q}$ have already been defined by $(\ref{pdmp:def:GQbarre})$ and $(\ref{pdmp:def:fQbarre})$. Furthermore, since the transition kernel $\overline{Q}$ admits $\widetilde{Q}$ as a density according to the measure $\mu$, one may also write
		\begin{equation}		\label{pdmp:exprR2}
		R(x,B) = \int_B \Bigg[\int_0^{t^\star(x)} f\widetilde{Q}(x,s,y) \ud s + G\widetilde{Q}(x,t^\star(x),y)\Bigg]\mu(\ud y).
		\end{equation}
		Together with $(\ref{pdmp:minoG})$ and $(\ref{pdmp:minoGQ})$,
		\begin{eqnarray}					
		R(x,B)&\geq&G\big(x,t^\star(x)\big) \int_B \widetilde{Q}(x,t^\star(x),y)\mu(\ud y) \nonumber \\
		&\geq&m \mu(B)  \exp\left( - \int_0^{t^\star(x)} M(s)\ud s\right) . 	\label{pdmp:minoR}
		\end{eqnarray}
		In particular, if $\stackrel{\circ}{B}\neq\emptyset$, $R(x,B)>0$ because $\mu(B)>0$ according to Assumptions \ref{hyps1}.
		\end{rem}

		\noindent
		Let us denote by $\nu_n$ (resp. by $\widetilde{\nu}_n$, by $\eta_n$) the distribution of $Z_n$ (resp. of the couple $(Z_n,Z_{n+1})$, of $(Z_n,Z_{n+1},S_{n+1})$). The following remark deals with the relation between $\widetilde{\nu}_n$, $\nu_n$ and the transition kernel $R$.

		\begin{rem}
		Let $n$ be an integer and $A\times B\in\mathcal{B}(E)^{\otimes2}$. One may write
		\begin{eqnarray}
		\widetilde{\nu}_n (A\times B) &=& \prob_{\nu_0}(Z_{n+1}\in B , Z_n \in A) 		\nonumber \\
		&=& \int_A R(x,B) \nu_n(\ud x) .										\label{pdmp:tildenu}
		\end{eqnarray}
		\end{rem}
		
		\noindent
		We focus our attention on the relation between the probability measures $\eta_n$ and $\nu_n$.
		\begin{rem}
		Let $t\geq0$ and $B\in\mathcal{B}(E)$. We have
		\begin{align*}
		\prob_{\nu_0}(S_{n+1}&>t,Z_{n+1}\in B|Z_n)\\
		&=\mathbf{1}_{\{0\leq t<t^\star(Z_n)\}}\Bigg[\int_{t\wedge t^\star(Z_n)}^{t^\star(Z_n)} f\overline{Q}(Z_n,s,B)\ud s+G\overline{Q}(Z_n,t^\star(Z_n),B) \Bigg] .
		\end{align*}
		This obviously induces that, for any $A,B\in\mathcal{B}(E)$ and $t\geq0$,
		\begin{align}
		\eta_n\big(A&\times B\times]t,+\infty[\big) \nonumber \\
		&= \int_A \mathbf{1}_{\{0\leq t<t^\star(x)\}}\Bigg[\int_{t\wedge t^\star(x)}^{t^\star(x)} f\overline{Q}(x, s,B)\ud s+G\overline{Q}(x,t^\star(x),B) \Bigg] \nu_n(\ud x) . \label{expr:etan}
		\end{align}
		\end{rem}

	\noindent
	The main result of this part lies in the following proposition. It deals with the existence of a jump rate for the conditional distribution of $S_{n+1}$ given $Z_n, Z_{n+1}$.

\begin{prop1}
\label{txdesaut}
Let $n$ be an integer. The conditional distribution of $S_{n+1}$ given $Z_n,Z_{n+1}$ satisfies, for any $t\geq0$,
$$
\prob_{\nu_0}(S_{n+1}>t|Z_n,Z_{n+1})=\exp\left(-\int_0^{t\wedge t^\star(Z_n)}\widetilde{\lambda}(Z_n,Z_{n+1},s)\ud s\right)\mathbf{1}_{\{0\leq t<t^\star(Z_n)\}} ,
$$
where the jump rate $\widetilde{\lambda}$ is defined for any $x,y\in E$, $0\leq t\leq t^\star(x)$, by
\begin{equation}
\label{defLAMBDA}
\widetilde{\lambda}(x,y,t) = \frac{f\widetilde{Q}(x,t,y)}{\int_t^{t^\star(x)} f\widetilde{Q}(x,s,y)\ud s + G\widetilde{Q}(x,t^\star(x),y)} .
\end{equation}
\end{prop1}

\begin{proof}
Let $x,y\in E$. $\widetilde{\lambda}(x,y,\cdot)$ is a continuous function on the interval $[0,t^\star(x)]$ because $f(x,\cdot)$ and $\widetilde{Q}(x,\cdot,y)$ are two continuous functions in the light of Assumptions \ref{hyps2}. The survival function $\widetilde{G}$ associated to $\widetilde{\lambda}$ is defined for any $x,y\in E$, $0\leq t\leq t^\star(x)$, by
		\begin{equation}\label{defG001}
		\widetilde{G}(x,y,t) =\exp\left(-\int_0^t\widetilde{\lambda}(x,y,s)\ud s\right).
		\end{equation}
		Moreover, for any $0\leq t<t^\star(x)$, $\widetilde{\lambda}(x,y,t)=-u'(t)/u(t)$ with
		$$u(t)=\int_t^{t^\star(x)} f\widetilde{Q}(x, s,y) \ud s +G\widetilde{Q}(x,t^\star(x),y) .$$
		As a consequence, we have
		\begin{align*}
		\int_0^t\widetilde{\lambda}(x,y,s)\ud s~&= -\ln\left(\int_t^{t^\star(x)} f\widetilde{Q}(x, s,y) \ud s+G\widetilde{Q}(x,t^\star(x),y)\right)\\
		&~\phantom{=} +\ln\left(\int_0^{t^\star(x)} f\widetilde{Q}(x, s,y) \ud s +G\widetilde{Q}(x,t^\star(x),y)\right) \\
		&=~-\ln\left( \frac{\int_t^{t^\star(x)} f\widetilde{Q}(x, s,y) \ud s +G\widetilde{Q}(x,t^\star(x),y)}{\int_0^{t^\star(x)} f\widetilde{Q}(x, s,y) \ud s +G\widetilde{Q}(x,t^\star(x),y)}\right).
		\end{align*}
		Finally, together with $(\ref{defG001})$,
		\begin{equation} \label{defG}
		\widetilde{G}(x,y,t) = \frac{\int_t^{t^\star(x)} f\widetilde{Q}(x, s,y) \ud s  +  G\widetilde{Q}(x,t^\star(x),y)}{   \int_0^{t^\star(x)} f\widetilde{Q}(x,s,y) 
\ud s  + G \widetilde{Q}(x,t^\star(x),y)}  .
		\end{equation}
		In order to establish the expected result, we need to prove the equality
		\begin{equation}
		\eta_n\big(A\times B\times]t,+\infty[\big) = \int_{A\times B} \widetilde{G}\big(x,y,t\wedge t^\star(x)\big)\mathbf{1}_{\{0\leq t<t^\star(x)\}} \widetilde{\nu}_n(\ud x\times\ud y) ,	\label{pdmp:desint:toshow}
		\end{equation}
		for any $A,B\in\mathcal{B}(E)$ and $t\geq0$. By $(\ref{pdmp:tildenu})$, we have
		\begin{align*}
		\int_{A\times B} \widetilde{G}\big(x,y,t\wedge t^\star(x)\big)\mathbf{1}&_{\{0\leq t<t^\star(x)\}} \widetilde{\nu}_n(\ud x\times\ud y)\\
		&= \int_{A\times B} \widetilde{G}\big(x,y,t\wedge t^\star(x)\big)\mathbf{1}_{\{0\leq t<t^\star(x)\}} R(x,\ud y) \nu_n(\ud x).
		\end{align*}
		Thus, with $(\ref{pdmp:exprR2})$ and $(\ref{defG})$, we obtain
		\begin{align*}
		\int_{A\times B}& \widetilde{G}\big(x,y,t\wedge t^\star(x)\big)\mathbf{1}_{\{0\leq t<t^\star(x)\}} \widetilde{\nu}_n(\ud x\times\ud y)\\
		&= \int_{A\times B} \mathbf{1}_{\{0\leq t<t^\star(x)\}} \Bigg[\int_{t\wedge t^\star(x)}^{t^\star(x)} f\widetilde{Q}(x, s,y) \ud s  +  G\widetilde{Q}(x,t^\star(x),y) \Bigg] \mu(\ud y) \nu_n(\ud x)\\
		&= \int_A \mathbf{1}_{\{0\leq t<t^\star(x)\}} \Bigg[\int_{t\wedge t^\star(x)}^{t^\star(x)} f\overline{Q}(x, s,B) \ud s  +  G\overline{Q}(x,t^\star(x),B) \Bigg] \nu_n(\ud x). 
		\end{align*}
		Together with the expression $(\ref{expr:etan})$ of $\eta_n$, this directly implies $(\ref{pdmp:desint:toshow})$ and, therefore, the expected result.
		\end{proof}

		\begin{rem} \label{pdmp:rem:desint}
		Let $n$ be an integer. On the strength of Jirina's theorem (see for instance Theorem 11.7 of \cite{Ouv}), there exists a kernel family $(\gamma_{x,y}(\cdot))_{(x,y)\in E^2}$ such that for any $A\times B\times\Gamma$ in $\mathcal{B}(E)^{\otimes 2}\otimes\mathcal{B}(\mathbf{R}_+)$, we have
		$$ \eta_n(A\times B\times \Gamma) = \int_{A\times B} \gamma_{x,y}(\Gamma) \widetilde{\nu}_n(\ud x\times\ud y) .$$
		Let $x,y\in E^2$. By Proposition \ref{txdesaut}, $\gamma_{x,y}$ does not depend on $n$. Furthermore, we have the relation
		\begin{eqnarray*}
		\gamma_{x,y}\big([t,+\infty[\big)	&=& \prob_{\nu_0}(S_1>t | Z_0=x,Z_1=y) \\
								&=& \widetilde{G}(x,y,t),
		\end{eqnarray*}
		if $t<t^\star(x)$, by $(\ref{pdmp:desint:toshow})$.
		\end{rem}		
	
		\noindent	
		We also have the following continuous-time martingale property.
		\begin{cor1} \label{cumparsita}
		Let $i$ be an integer. The continuous-time process $M^{i+1}$ given by,
		\begin{equation}	\label{pdmp:def:Miplus1}
		\forall 0\leq t<t^\star(Z_i),~M^{i+1}(t)    =    N^{i+1}(t)   -   \int_0^t   \widetilde{\lambda}(Z_i,Z_{i+1},u) \mathbf{1}_{\{S_{i+1}\geq u\}} \ud u ,
		\end{equation}
		is a $(\sigma(Z_i,Z_{i+1})\vee\mathcal{F}_t^{i+1})_{0\leq t<t^\star(Z_i)}$-continuous-time martingale.
		\end{cor1}
		
		\begin{proof}
		The proof is similar to the one of Lemma 2.2 of \cite{Az12a}. Indeed, the conditional jump rate of $S_{i+1}$ given $Z_i,Z_{i+1}$ is $\widetilde{\lambda}(Z_i,Z_{i+1},\cdot)$ by Proposition \ref{txdesaut}.
		\end{proof}

\subsection{Ergodicity}
\label{sub:ergo}

In this part, we focus our attention on the asymptotic behavior of the Markov chains $(Z_n)_{n\geq0}$, $(Z_n,S_{n+1})_{n\geq0}$ and $(Z_n,Z_{n+1},S_{n+1})_{n\geq0}$. Our main objective is to state that one can apply the ergodic theorem to these Markov chains. First, we derive uniform lower bounds for the transition kernel $R$ and $G\widetilde{Q}(x,t^\star(x),y)$.

		\begin{rem}	\label{minR}
		Let $x,y\in E$ and $B\in\mathcal{B}(E)$. By $(\ref{pdmp:minoG})$ and $(\ref{pdmp:minoGQ})$, we have
		\begin{equation}
		G\widetilde{Q}(x,t^\star(x),y)    \geq   m_2  ,\label{pdmp:minoGQunif}
		\end{equation}
		where $m_2$ is given by
		\begin{equation*}
		m_2 = m \exp\left(-\int_0^{\|t^\star\|_{\infty}} M(s)\ud s\right) >0 .
		\end{equation*}
		In particular, $m_2$ do not depend on $x$ and $y$. Together with $(\ref{pdmp:minoR})$, we have
		\begin{equation}						\label{pdmp:minoRunif}
		R(x,B)\geq m_2 \mu(B) .
		\end{equation}
		\end{rem}
		
		\noindent
		According to the previous remark, one may state that the Markov chain $(Z_n)_{n\geq0}$ is ergodic.

		\begin{prop1}We have the following statements:
		\vspace{-0.3cm}
		\begin{enumerate}[a)]
		\item $(Z_n)_{n\geq0}$ is $\mu$-irreducible, aperiodic and admits a unique invariant measure, which we denote by $\nu$.
		\item There exist $\rho>1$ and $r>0$ such that,
		\begin{equation}															\label{CVgeom}
		\forall n\geq0,~\sup_{\xi\in E} \big\| R^n(\xi,\cdot) - \nu\big\|_{TV} \leq r \rho^{-n} ,
		\end{equation}
		where $\|\cdot\|_{TV}$ denotes the total variation norm (see \cite{HLL} for the definition).
		\item The Markov chain $(Z_n)_{n\geq0}$ is positive Harris-recurrent.
		\end{enumerate}
		\end{prop1}
		\begin{proof}
		By definition and Remark \ref{minR}, $(Z_n)_{n\geq0}$ is $\mu$-irre\-duci\-ble and aperiodic. In addition, the transition kernel $R$ obviously satisfies Doeblin's condition (see \cite[page 396]{MandT} for instance),
		$$\mu(B)>\epsilon ~\Rightarrow R(\xi,B)>m_2~\!\epsilon ,$$
		by $(\ref{pdmp:minoRunif})$. On the strength of Theorem 16.0.2 of \cite{MandT}, $(Z_n)_{n\geq0}$ admits a unique invariant measure $\nu$ since it is aperiodic and $(\ref{CVgeom})$ holds. In addition, from Theorem 4.3.3 of \cite{HLL}, $(Z_n)_{n\geq0}$ is positive Harris-recurrent.
		\end{proof}

\noindent
Now, we shall see that the sets with non-empty interior are charged by the invariant measure $\nu$.
		
		\begin{rem}											\label{pdmp:rem:nuA}
		The transition kernel $R$ admits a density according to the measure $\mu$. As a consequence, the invariant measure $\nu$ and the auxiliary measure $\mu$ are equivalent in the light of Theorem 10.4.9 of \cite{MandT}. This ensures that for any measurable set $A$ with non-empty interior, $\nu(A)>0$ by Assumptions \ref{hyps1}.					
		\end{rem}

\noindent
The following lemma deals with the limits of the sequences $(\widetilde{\nu}_n)_{n\geq0}$ and $(\eta_n)_{n\geq0}$.

		\begin{lem1}	\label{triomino}				
		For any initial distribution $\nu_0=\delta_{\{x\}}$, $x\in E$,
		$$\lim_{n\to+\infty} \big\|\widetilde{\nu}_n-\widetilde{\nu}\big\|_{TV}=0\qquad\text{and}\qquad \lim_{n\to+\infty} \big\|\eta_n - \eta\big\|_{TV}=0,$$
		where the limit distributions $\widetilde{\nu}$ and $\eta$ are given by,
		\begin{alignat}{2}
		\forall A\times B \in \mathcal{B}(E)^{\otimes 2},&~\widetilde{\nu}(A\times B) &&=~\int_A \nu(\ud x) R(x,B) ,	\label{pdmp:tildenulim} \\
		\forall A\times B\times\Gamma\in\mathcal{B}(E)^{\otimes 2}\otimes\mathcal{B}(\mathbf{R}_+),&~\eta(A\times B\times\Gamma) &&=~\int_{A\times B\times\Gamma} \gamma_{x,y}(\ud s) \widetilde{\nu}(\ud x\times\ud y) .		\label{pdmp:etalim}
		\end{alignat}
		\end{lem1}
		
		\begin{proof}
		Let $g$ be a measurable function bounded by $1$. By virtue of Fubini's theorem, we have
		$$
		\left| \int_{E\times E} g(x,y) \big(  \widetilde{\nu}_n (\ud x\times\ud y) - \widetilde{\nu}(\ud x\times \ud y) \big) \right| = \left| \int_{E} \big(\nu_n(\ud x) - \nu(\ud x)\big) \int_{E} g(x,y) R(x,\ud y)\right| ,
		$$
		from the expression of $\widetilde{\nu}_n$ $(\ref{pdmp:tildenu})$ and the definition of $\widetilde{\nu}$ $(\ref{pdmp:tildenulim})$. Thus,
		$$
		\left| \int_{E\times E} g(x,y)  \big(  \widetilde{\nu}_n (\ud x\times\ud y) - \widetilde{\nu}(\ud x\times \ud y) \big) \right| = \left|\int_E h(x) \big(\nu_n(\ud x) - \nu(\ud x)\big) \right| ,
		$$
		where $h:x\mapsto\int_E g(x,y)R(x,\ud y)$ is bounded by $1$ because $g$ is bounded by $1$ and $R$ is a transition kernel. Finally,
		$$ \big\|\widetilde{\nu}_n - \widetilde{\nu}\big\|_{TV}  \leq \big\|\nu_n-\nu\|_{TV} .$$
		One obtains the expected limit from $(\ref{CVgeom})$. Now, we state the second limit. Let $g'$ a measurable function bounded by $1$. In the light of Remark \ref{pdmp:rem:desint},
		$$\left|\int_{E\times E\times\mathbf{R}_+} g'(x,y,s) \big(  \eta_n(\ud x\times\ud y\times \ud s) - \eta(\ud x\times \ud y\times
\ud s) \big)\right| \leq \|h\|_{\infty} \|\widetilde{\nu}_n - \widetilde{\nu}\|_{TV},$$	
		by virtue of Fubini's theorem and with the function $h$ given by,
		$$\forall x,y\in E,~h(x,y)  =  \int_{\mathbf{R}_+} g'(x,y,s) \gamma_{x,y}(\ud s) .$$
		As $h$ is bounded by $1$, we have
		$$ \left| \int_{E\times E\times\mathbf{R}_+} g'(x,y,s) \big(  \eta_n(\ud x\times\ud y\times \ud s) - \eta(\ud x\times \ud y\times
\ud s) \big) \right| \leq \|\widetilde{\nu}_n - \widetilde{\nu}\|_{TV} .$$			 
		We previously established that $\|\widetilde{\nu}_n-\widetilde{\nu}\|_{TV}$ tends to $0$, thus $\|\eta_n - \eta\|_{TV}$ tends to $0$ too.
		\end{proof}

		\noindent
		The previous lemma induces the following result.
		
		\begin{prop1}We have the following statements:
		\vspace{-0.3cm}										
		\begin{enumerate}[a)]
		\item $(Z_n,Z_{n+1})_{n\geq0}$ (resp. $(Z_n,Z_{n+1},S_{n+1})_{n\geq0}$) is $\widetilde{\nu}$-irreducible (resp. $
\eta$-irreducible).
		\item $(Z_n,Z_{n+1})_{n\geq0}$ and $(Z_n,Z_{n+1},S_{n+1})_{n\geq0}$ are positive Harris-recur\-rent and ape\-rio\-dic Mar\-kov chains.
		\item $\widetilde{\nu}$ (resp. $\eta$) is the unique invariant measure of the chain $(Z_n,Z_{n+1})_{n\geq0}$ (resp. of the chain $(Z_n,Z_{n+1},S_{n+1})_{n\geq0}$).
		\end{enumerate}	
		\end{prop1}
		\begin{proof}
		This result is a consequence of Lemma \ref{triomino}. The proof is similar to the one of Proposition 4.2 given in \cite{Az12a}.
		\end{proof}

		\noindent
		According to the previous discussion, the Markov chains $(Z_n)_{n\geq0}$, $(Z_n,Z_{n+1})_{n\geq0}$ and $(Z_n,Z_{n+1},S_{n+1})_{n\geq0}$ are positive Harris-recurrent. As a consequence, one may apply the ergodic theorem to these Markov chains (Theorem 17.1.7 of \cite{MandT}).

		\section{Proofs of the main results}
\label{sec:proofs}

This section is dedicated to the presentation of most of the proofs of our main results provided in Section \ref{sec:problem}. In all this part, we consider a set $A\in\mathcal{C}_2$ and $e>0$. In particular, this ensures the existence of a family $\{B_1,\dots,B_p\}$, $p\geq2$, depending on $e$ and satisfying the last condition of Assumptions \ref{hyps1}.

\subsection{Estimation of $\widetilde{l}$}
	\label{sub:ltilde}

Here, we focus our attention on the estimation of the function $\widetilde{l}$, which is an approximation of the jump rate $\widetilde{\lambda}$. The precise definition of $\widetilde{l}$ may be found in Lemma \ref{defltilde}. The estimation of $\widetilde{l}$ is a keystone in our procedure for estimating the conditional density $f$. Here, our main objective is the presentation of the proof of Proposition \ref{estimationltilde}, which states at the end of this subsection. Two technical lemmas about lower and upper bounds are now presented. These results will be useful in all the sequel.
		
		\begin{lem1}
		Let $1\leq k\leq p$. For any $0\leq t<t^\star(A)$,
		\begin{equation} 	\label{pdmp:minoGtilde}
		 \inf_{x\in A,y\in B_k} \widetilde{G}(x,y,t)>0 .
		 \end{equation}
		\end{lem1}

		\begin{proof}
		We have from $(\ref{defG})$ and $(\ref{pdmp:minoGQunif})$, for any $x,y\in E$, $0\leq t<t^\star(x)$,
		\begin{eqnarray*}
		\widetilde{G}(x,y,t) &\geq& \frac{G\widetilde{Q}(x,t^\star(x),y)}{\int_0^{t^\star(x)} f\widetilde{Q}(x,s,y)\ud s ~\!+~\! G\widetilde{Q}(x,t^\star(x),y)} \\
		~&\geq&  \frac{m_2}{\big(\|t^\star\|_{\infty}~\! \|f\|_{\infty}   + 1\big) \|\widetilde{Q}\|_{\infty}},
		\end{eqnarray*}
		because $f$, $\widetilde{Q}$ and $t^\star$ are bounded (according to Assumptions \ref{hyps2}). This achieves the proof.
		\end{proof}

		\noindent
		One may also prove that the jump rate $\widetilde{\lambda}$ is a bounded function.
		\begin{lem1}	\label{lambdabounded}
		Let $x,y\in E$ and $0\leq t\leq t^\star(x)$. Thus,
		$$ \big|\widetilde{\lambda}(x,y,t)\big| \leq \frac{\|f\|_{\infty}\|\widetilde{Q}\|_\infty}{m_2} .$$
		\end{lem1}

		\begin{proof}
		By $(\ref{defLAMBDA})$, we have
		\begin{eqnarray*}
		\widetilde{\lambda}(x,y,t) &=&\frac{f\widetilde{Q}(x,t,y)}{\int_t^{t^\star(x)} f\widetilde{Q}(x,s,y)\ud s + G\widetilde{Q}(x,t^\star(x),y)} \\
		~&\leq& \frac{f\widetilde{Q}(x,t,y)}{G\widetilde{Q}(x,t^\star(x),y)} .
		\end{eqnarray*}
		Therefore, with $(\ref{pdmp:minoGQunif})$ and since $f$ and $\widetilde{Q}$ are bounded, one immediately obtains the expected result.
		\end{proof}


		\begin{rem} \label{pdmp:rem:nutildeAB}
		For any $1\leq k\leq p$, $\widetilde{\nu}(A\times B_k)>0$. Indeed,
		$$ \widetilde{\nu}(A\times B_k) = \int_A R(x,B_k) \nu(\ud x) .$$
		For any $x$, $R(x,B_k)\geq m_2\mu(B_k)$ by $(\ref{pdmp:minoRunif})$, thus,
		$$\widetilde{\nu}(A\times B_k) \geq m_2 \mu(B_k) \nu(A) .$$
		One may conclude because $\nu(A)>0$ by Remark \ref{pdmp:rem:nuA} and $\mu(B_k)>0$ since $B_k$ is a set with non-empty interior (see Assumptions \ref{hyps1}).
		\end{rem}
		
		\noindent
		The following results deal with the asymptotic properties of $Y_n$. Recall that the definition of the process $Y_n$ is given by $(\ref{yn})$. Its generalized inverse $Y_n^+$ is defined by $(\ref{pdmp:def:ynplus})$.

		\begin{lem1} \label{lem:inna}
		Let $1\leq k\leq p$. For any $x\in E$,
		$$ \forall 0\leq t<t^\star(A),~\frac{Y_n(A,B_k,t)}{n} \to \int_{A\times B_k} \widetilde{G}(x,y,t)\widetilde{\nu}(\ud x\times\ud y) ~\textit{$\mathbf{P}_x$-a.s.},$$
		when $n$ goes to infinity. In addition, this limit is strictly positive.
		\end{lem1}

		\begin{proof}
		By virtue of the ergodic theorem applied to the chain $(Z_n,Z_{n+1},S_{n+1})_{n\geq0}$,
		$$ \frac{1}{n} Y_n(A,B_k,t) \to \eta\big(A\times B_k\times [t,+\infty[ \big)~\textit{$\mathbf{P}_x$-a.s.}$$
		Together with $(\ref{pdmp:etalim})$ and Remark \ref{pdmp:rem:desint}, we have for any $0\leq t<t^\star(A)$,
		$$\frac{Y_n(A,B_k,t)}{n} \to \int_{A\times B_k} \widetilde{G}(x,y,t) \widetilde{\nu}(\ud x\times\ud y) ~\textit{$\mathbf{P}_x$-a.s.,}$$
		when $n$ goes to infinity. Since $\inf_{x\in A,y\in B_k}\widetilde{G}(x,y,t)>0$ by $(\ref{pdmp:minoGtilde})$ and $\widetilde{\nu}(A\times B_k)>0$ by Remark \ref{pdmp:rem:nutildeAB}, the limit is strictly positive.
		\end{proof}


		\begin{lem1}	\label{inna2}
		Let $1\leq k\leq p$, $0\leq t<t^\star(A)$ and $x\in E$. Then, for any integer $n$,
		\begin{equation*}	
		Y_n(A,B_k,t)^+\leq 1~\textit{$\prob_x$-a.s.}
		\end{equation*}
		and, as $n$ goes to infinity,
		\begin{eqnarray*}
		Y_n(A,B_k,t)^+ 								&\longrightarrow&	0\quad \prob_x\textit{-a.s.},		\\
		\mathbf{1}_{\{Y_n(A,B_k,t)=0\}} 				&\longrightarrow & 	0\quad\prob_x\textit{-a.s.},		\\
		\int_0^t \mathbf{1}_{\{Y_{n}(A ,B_k,s) = 0  \}}  \ud s 	&\longrightarrow&	0\quad\prob_x\textit{-a.s.}		
		\end{eqnarray*}
		\end{lem1}

		\begin{proof}
		This result is a corollary of Lemma \ref{lem:inna}. One may find a similar proof in \cite{Az12a}, Lemma 4.11.
		\end{proof}

		\noindent
		The function $\widetilde{l}$ is defined in the following proposition.
		\begin{prop1}		\label{defltilde}			
		Let $1\leq k\leq p$, $0\leq t<t^\star(A)$ and $x\in E$. When $n$ goes to infinity,
		$$Y_n(A,B_k,t)^+ \sum_{i=0}^{n-1} \widetilde{\lambda}(Z_i,Z_{i+1},t) \mathbf{1}_{\{Z_i\in A\}} \mathbf{1}_{\{Z_{i+1}\in B_k\}} \mathbf{1}_{\{S_{i
+1}\geq t\}} \longrightarrow  \widetilde{l}(A,B_k,t) ~\textit{$\prob_x$-a.s.,} $$
		where
		\begin{equation}	\label{pdmp:def:ltilde}
		\widetilde{l}(A,B_k,t) =  \frac{ \int_{A\times B_k} \widetilde{\lambda}(u,v,t) \widetilde{G}(u,v,t) \widetilde{\nu}(\ud u\times\ud v)}{\int_{A\times 
B_k} \widetilde{G}(u,v,t) \widetilde{\nu}(\ud u\times \ud v)} .
		\end{equation}
		In addition, $\widetilde{l}(A,B_k,\cdot)$ is continuous on $[0,t^\star(A)[$.
		\end{prop1}
		\begin{proof}
This is an application of the ergodic theorem to the chain $(Z_n,Z_{n+1},S_{n+1})_{n\geq0}$. One may refer the reader to the proof of Proposition 4.12 of \cite{Az12a}, which is similar. $\widetilde{l}(A,B_k,\cdot)$ is continuous because $\widetilde{\lambda}$ and $\widetilde{G}$ are continuous and bounded. Therefore, one may apply the theorem of continuity under the integral sign.
		\end{proof}

		Now, one may state some results about continuous-time martingales. First, we give a conditional independence property. Denote by $\mathcal{G}_n$ the $\sigma$-field $\sigma(Z_0,\dots,Z_n)$ for each integer $n$.

		\begin{prop1}
		\label{mozart2}
		Let $n$ be an integer and $1\leq i\leq n$. For each $j\neq i$, let $t_j\geq0$ and $t$ be a positive real number. Then, we have
		$$ \bigvee_{j\neq i} \mathcal{F}_{t_j}^{j}  ~   \underset{\mathcal{G}_n}{\bot} ~ \mathcal{F}_t^i 	\qquad\text{and}\qquad    
\mathcal{F}_t^i ~ \underset{\sigma(Z_{i-1},Z_i)}{\bot} ~ \mathcal{G}_n .$$
		We deduce from Proposition 6.8 of \cite{Kal} this direct corollary. For any $s<t$, $0\leq i\leq n-1$,
		$$
		\bigvee_{j\neq i+1} \mathcal{F}_{s}^j  ~   \underset{\mathcal{G}_n \vee \mathcal{F}_s^{i+1}}{\bot} ~ \mathcal{F}_t^{i+1}
		\qquad\text{and}\qquad
		\mathcal{F}_t^{i+1} ~\underset{\sigma(Z_i,Z_{i+1}) \vee \mathcal{F}_s^{i+1}}{\bot}~\mathcal{G}_n .
		$$
		\end{prop1}
		\begin{proof}
		The technical proof is deferred in Appendix \ref{appendixA}.
		\end{proof}

		\begin{theo}	\label{pdmp:theo:Mn}
		Let $1\leq k\leq p$. The process $M_n(A,B_k,\cdot)$, defined for any $0\leq t<t^\star(A)$, by
		$$M_n(A,B_k,t) = \sum_{i=0}^{n-1} M^{i+1}(t) \mathbf{1}_{\{Z_i\in A\}}\mathbf{1}_{\{Z_{i+1}\in B_k\}} ,$$
		is a $(\mathcal{G}_n\vee\bigvee_{i=0}^{n-1}\mathcal{F}_t^{i+1})_{0\leq t<t^\star(A)}$-continuous-time martingale.
		\end{theo}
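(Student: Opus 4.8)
The plan is to show that $M_n(A,B_k,\cdot)$ is a martingale by reducing it to the elementary martingale property of each $M^{i+1}$ established in Corollary \ref{cumparsita}, and then handling the indicator weights $\mathbf{1}_{\{Z_i\in A\}}\mathbf{1}_{\{Z_{i+1}\in B_k\}}$ by a conditioning argument. Write $\mathcal{H}_t = \mathcal{G}_n \vee \bigvee_{i=0}^{n-1}\mathcal{F}_t^{i+1}$ for the filtration in the statement. First I would check adaptedness: $N^{i+1}(t)=\mathbf{1}_{\{S_{i+1}\leq t\}}$ is $\mathcal{F}_t^{i+1}$-measurable, the integral term in $M^{i+1}(t)$ is $\sigma(Z_i,Z_{i+1})\vee\mathcal{F}_t^{i+1}\subset\mathcal{H}_t$-measurable, and the indicators $\mathbf{1}_{\{Z_i\in A\}}$, $\mathbf{1}_{\{Z_{i+1}\in B_k\}}$ are $\mathcal{G}_n$-measurable; hence $M_n(A,B_k,t)$ is $\mathcal{H}_t$-measurable. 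Integrability is immediate since $N^{i+1}$ is bounded by $1$ and, by Lemma \ref{lambdabounded}, the compensator $\int_0^t\widetilde{\lambda}(Z_i,Z_{i+1},u)\mathbf{1}_{\{S_{i+1}\geq u\}}\ud u$ is bounded by $t\,\|f\|_\infty\|\widetilde{Q}\|_\infty/m_2$.

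The core step is the increment identity: for $0\leq s<t<t^\star(A)$, I want $\esp\big[M_n(A,B_k,t)-M_n(A,B_k,s)\,\big|\,\mathcal{H}_s\big]=0$. By linearity it suffices to prove, for each fixed $i$,
$$
\esp\Big[\big(M^{i+1}(t)-M^{i+1}(s)\big)\mathbf{1}_{\{Z_i\in A\}}\mathbf{1}_{\{Z_{i+1}\in B_k\}}\,\Big|\,\mathcal{H}_s\Big]=0 .
$$
Since $\mathbf{1}_{\{Z_i\in A\}}\mathbf{1}_{\{Z_{i+1}\in B_k\}}$ is $\mathcal{G}_n\subset\mathcal{H}_s$-measurable, it can be pulled out, so the claim reduces to
$$
\esp\big[M^{i+1}(t)-M^{i+1}(s)\,\big|\,\mathcal{H}_s\big]=0 .
$$
Now $\mathcal{H}_s = \mathcal{G}_n \vee \bigvee_{j=0}^{n-1}\mathcal{F}_s^{j+1} = \big(\mathcal{G}_n\vee\mathcal{F}_s^{i+1}\big)\vee\bigvee_{j\neq i+1}\mathcal{F}_s^{j+1}$. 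The second corollary in Proposition \ref{mozart2} gives precisely that $\bigvee_{j\neq i+1}\mathcal{F}_s^{j}$ is conditionally independent of $\mathcal{F}_t^{i+1}$ given $\mathcal{G}_n\vee\mathcal{F}_s^{i+1}$; since $M^{i+1}(t)$ is $\mathcal{F}_t^{i+1}$-measurable, this lets me drop the extraneous $\sigma$-fields:
$$
\esp\big[M^{i+1}(t)-M^{i+1}(s)\,\big|\,\mathcal{H}_s\big] = \esp\big[M^{i+1}(t)-M^{i+1}(s)\,\big|\,\mathcal{G}_n\vee\mathcal{F}_s^{i+1}\big] .
$$
Then the second conditional independence relation, $\mathcal{F}_t^{i+1}\ \bot\ \mathcal{G}_n$ given $\sigma(Z_i,Z_{i+1})\vee\mathcal{F}_s^{i+1}$, reduces this further to $\esp\big[M^{i+1}(t)-M^{i+1}(s)\,\big|\,\sigma(Z_i,Z_{i+1})\vee\mathcal{F}_s^{i+1}\big]$, which vanishes by Corollary \ref{cumparsita}, since $M^{i+1}$ is a martingale in exactly that filtration. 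Collapsing the sum over $i$ finishes the argument.

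The main obstacle is bookkeeping the filtrations correctly and invoking Proposition \ref{mozart2} in the right form — one has to be careful that the corollary there is stated with $\mathcal{F}^j_s$ (same time $s$) for the indices $j\neq i+1$, which matches $\mathcal{H}_s$ exactly, whereas a naive filtration $\bigvee_j\mathcal{F}^{j+1}_t$ would not be the one we condition on; the martingale property is asserted with respect to $\mathcal{H}_t$, and conditioning is always done at the earlier time $s$, so no mismatch arises. A secondary point to verify is that the two conditional independence statements can be chained: formally one uses that conditional independence of $\mathcal{F}_t^{i+1}$ from $\bigvee_{j\neq i+1}\mathcal{F}_s^j$ given $\mathcal{G}_n\vee\mathcal{F}_s^{i+1}$ allows the first reduction, and then conditional independence from $\mathcal{G}_n$ given $\sigma(Z_i,Z_{i+1})\vee\mathcal{F}_s^{i+1}$ allows the second, each time using the tower property together with the fact that a conditionally-independent enlargement of the conditioning $\sigma$-field does not change the conditional expectation of an $\mathcal{F}_t^{i+1}$-measurable integrable variable. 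Everything else is routine.
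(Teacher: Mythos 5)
Your argument is correct and follows essentially the same route as the paper, which proves the theorem precisely as a corollary of Proposition \ref{mozart2} and Corollary \ref{cumparsita}: pull out the $\mathcal{G}_n$-measurable indicators, use the two conditional independence statements to collapse the conditioning $\sigma$-field from $\mathcal{G}_n\vee\bigvee_{j}\mathcal{F}_s^{j}$ down to $\sigma(Z_i,Z_{i+1})\vee\mathcal{F}_s^{i+1}$, and then invoke the martingale property of each $M^{i+1}$ in its own filtration. The only wording slip is your claim that $M^{i+1}(t)$ is $\mathcal{F}_t^{i+1}$-measurable, whereas it is $\sigma(Z_i,Z_{i+1})\vee\mathcal{F}_t^{i+1}$-measurable; since $\sigma(Z_i,Z_{i+1})\subset\mathcal{G}_n$ lies inside both conditioning $\sigma$-fields, the conditional independences extend to this larger $\sigma$-field and both reductions remain valid.
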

		\begin{proof}
		One may recall that the process $M^{i+1}$ has already been defined by $(\ref{pdmp:def:Miplus1})$. The result is a corollary of Proposition \ref{mozart2} and Corollary \ref{cumparsita}. The arguments are given in the proof of Theorem 4.13 of \cite{Az12a}
		\end{proof}

		\noindent
		One may derive the expression of a new continuous-time martingale.

		\begin{lem1}		\label{crochettilde}																				
		Let $1\leq k\leq p$ and $n$ be an integer. The process $\widetilde{M}_n(A,B_k,\cdot)$, given for any $0\leq t<t^\star(A)$, by
		$$ \widetilde{M}_n(A,B_k,t) = \int_0^t Y_n(A,B_k,s)^+ \ud M_n(A,B_k,s) ,$$
		is a continuous-time martingale, whose predictable variation process $<\widetilde{M}_n(A,B_k)>$ satisfies, for any $x\in E$,
		$$\forall 0\leq t<t^\star(A),~<\widetilde{M}_n(A,B_k)>(t) \to 0~\prob_x\textit{-a.s.}~\text{when $n\to+\infty$.}$$
		\end{lem1}
		\begin{proof}
		This is a consequence of Lemma \ref{lambdabounded}, Lemma \ref{inna2} and Theorem \ref{pdmp:theo:Mn}. A similar proof may be found in \cite{Az12a}, Lemma 4.14.
		\end{proof}
	
		\begin{rem}
		This immediately induces that for any $0\leq t<t^\star(A)$,
		$$ \sup_{0\leq s\leq t}\big|\widetilde{M}_n(A,B_k,s)\big|\stackrel{\prob_x~}{\longrightarrow} 0 ,$$
		for any $x\in E$, by virtue of Lenglart's inequality. A reference may be found in the book \cite{And}, II.5.2.1. Lenglart's inequality.
		\end{rem}

		In the sequel, we are interested in the estimation of the function $\widetilde{l}(A,B_k,\cdot)$, defined for any $1\leq k\leq p$ by $(\ref{pdmp:def:ltilde})$. First, we shall estimate the function $\widetilde{L}(A,B_k,\cdot)$ defined by,
		$$ \forall 0\leq t<t^\star(A),~\widetilde{L}(A,B_k,t) = \int_0^t \widetilde{l}(A,B,s)\ud s.$$
		We consider the Nelson-Aalen type estimator $\widehat{\widetilde{L}}_n(A,B_k,\cdot)$ given by,
		$$ \forall 0\leq t<t^\star(A),~ \widehat{\widetilde{L}}_n(A,B_k,t) = \int_0^t Y_n(A,B_k,s)^+ \ud N_n(A,B_k,s) ,$$
		where $N_n(A,B_k,\cdot)$ is the counting process given by $(\ref{Nn})$. According to foregoing, the present framework is exactly the same one as in \cite{Az12a}.
		
		\begin{prop1}																									
		Let $1\leq k\leq p$, $0<t<t^\star(A)$ and $x\in E$. Then,
		$$ \sup_{0\leq s\leq t} \left|\widehat{\widetilde{L}}_n(A,B_k,s) -  \widetilde{L}(A,B_k,s)\right| \stackrel{\prob_x~}{\longrightarrow} 0~\text{when $n\to
+\infty$}.$$
		\end{prop1}

		\begin{proof}
		This is a corollary of Lemma \ref{crochettilde} and Proposition \ref{defltilde}. One may refer the interested reader to the proofs of Proposition 4.16 and Theorem 4.17 of \cite{Az12a}, which use similar arguments.
		\end{proof}

		\noindent
		We focus on smoothing this estimator in order to provide an estimate of $\widetilde{l}(A,B_k,\cdot)$. Recall that we consider a continuous kernel $K$ whose support is $[-1,1]$, $b>0$ and $0<t<t^\star(A)$. One may define our estimator of $\widetilde{l}(A,B_k,\cdot)$ by,
		$$\forall 0\leq s\leq t,~\widehat{\widetilde{l}}_{n,b,t} (A,B_k,s) = \frac{1}{b} \int_0^{t} K\Big(\frac{s-u}{b}\Big)\ud \widehat{\widetilde{L}}_n(A,B_k,u) .$$
		A direct calculus leads to the equivalent formula previously provided by $(\ref{estimateurltilde})$. In this context, one may prove Proposition \ref{estimationltilde}.

		\noindent
		\textbf{Proof of Proposition \ref{estimationltilde}.} The proof relies on the arguments provided in the one of Proposition 4.23 in \cite{Az12a}. \fin

\subsection{Approximation of $f$}
	\label{sub:approxf}
		In this subsection, we are interested in the proof of Proposition \ref{approxf}. For the sake of clarity, we denote for any $x,y\in E$ and $0\leq t\leq t^\star(x)$,
		\begin{equation}
		\label{pdmp:def:H}
		H(x,y,t) = \int_t^{t^\star(x)} f\widetilde{Q}(x,s,y)\ud s + G\widetilde{Q}(x,t^\star(x),y) .
		\end{equation}
		Therefore, by $(\ref{defLAMBDA})$, we have
		$$ H(x,y,t) \widetilde{\lambda}(x,y,t) = f\widetilde{Q}(x,t,y) .$$
		We shall establish some properties of $H$.

		\begin{lem1} \label{pdmp:lem:Hbounded}
		We have the following statements:
		\vspace{-0.3cm}
		\begin{enumerate}[a)]
		\item $\|H\|_{\infty}<+\infty$.
		\item For any $x,y\in E$, $0\leq t\leq t^\star(x)$, $H(x,y,t)\geq m_2$.
		\item Furthermore, there exists a constant $[H]_{Lip}>0$ such that,
		for any $x,y,u,v\in E$ and $0\leq t\leq t^\star(x)\wedge t^\star(u)$,
		\begin{equation} \label{Hlip}
		\big| H(x,y,t) - H(u,v,t)\big| \leq [H]_{Lip} d_2\big( (x,y),(u,v)\big) .
		\end{equation}
		\end{enumerate}
		\end{lem1}

		\begin{proof}
		First, by $(\ref{pdmp:def:H})$, we have
		$$ H(x,y,t) \leq \int_0^{t^\star(x)} f\widetilde{Q}(x,s,y)\ud s + G\widetilde{Q}(x,t^\star(x),y) .$$
		$G$ is bounded by $1$. In addition, $f$, $t^\star$ and $\widetilde{Q}$ are bounded functions according to Assumptions \ref{hyps1} and \ref{hyps2}. Thus,
		$$ H(x,y,t) \leq \|\widetilde{Q}\|_{\infty} \big( \|t^\star\|_{\infty} \|f\|_{\infty} +1\big) .$$
		Now, we prove that $m_2$ is a lower bound of $H$. Indeed, by $(\ref{pdmp:def:H})$ again, for any $x,y\in E$ and $0\leq t\leq t^\star(x)$,
		\begin{equation} \label{pdmp:eq:minoH}
		H(x,y,t)~\geq~G\widetilde{Q}(x,t^\star(x),y)~ \geq ~ m_2 .
		\end{equation}
		Finally, we state that $H$ is Lipschitz. By $(\ref{pdmp:def:H})$ again and by the triangle inequality,
		\begin{align*}
		\big|&H(x,y,t) - H(u,v,t)\big| \\
		&\leq \int_t^{t^\star(x)\vee t^\star(u)} \big|f\widetilde{Q}(u,s,v)-f\widetilde{Q}(x,s,y)\big|\ud s + \big|G\widetilde{Q}(u,t^\star(u),v)-G\widetilde{Q}(x,t^\star(x),y)\big| .
		\end{align*}
		Thus, since the product $f\widetilde{Q}$ is Lipschitz by $(\ref{fQlip})$, we have
		\begin{align*}
		\big| H(x,y,t) - H&(u,v,t)\big| \\
		&\leq \|t^\star\|_{\infty} [f\widetilde{Q}]_{Lip} d_2\big( (x,y),(u,v)\big) + \big|G\widetilde{Q}(u,t^\star(u),v)-G\widetilde{Q}(x,t^\star(x),y)\big| .
		\end{align*}
		Together with $(\ref{GQlip})$, we obtain $(\ref{Hlip})$ with
		$$[H]_{Lip} = \|t^\star\|_{\infty} [f\widetilde{Q}]_{Lip}+[G\widetilde{Q}]_{Lip} ,$$
		showing the three statements.
		\end{proof}

		\noindent
		Now, one may state that $\widetilde{\lambda}$ is also Lipschitz.
	
		\begin{lem1}		\label{pdmp:lem:lambdalip}
		There exists a constant $[\widetilde{\lambda}]_{Lip}>0$ such that, for any $x,y,u,v\in E$ and $0\leq t<t^\star(x)\wedge t^\star(u)$,
		$$
		\Big|\widetilde{\lambda}(x,y,t)-\widetilde{\lambda}(u,v,t)\Big| \leq [\widetilde{\lambda}]_{Lip} d_2\big((x,y),(u,v)\big) .
		$$
		\end{lem1}
		
\begin{proof}
From $(\ref{defLAMBDA})$ and $(\ref{pdmp:def:H})$, we have
$$
\Big|\widetilde{\lambda}(x,y,t) - \widetilde{\lambda}(u,v,t)\Big| = \frac{ \big|f\widetilde{Q}(x,t,y)H(u,v,t)-f\widetilde{Q}(u,t,v)H(x,y,t) \big|}{ H(u,v,t) H(x,y,t)}.
$$
Therefore, together with $(\ref{pdmp:eq:minoH})$, we obtain
$$
\Big|\widetilde{\lambda}(x,y,t) - \widetilde{\lambda}(u,v,t)\Big| \leq  \frac{1}{m_2^2}\big|f\widetilde{Q}(x,t,y)H(u,v,t)-f\widetilde{Q}(u,t,v)H(x,y,t) \big| .
$$
By the triangle inequality, this induces that
\begin{align*}
\big| \widetilde{\lambda}&(x,y,t) - \widetilde{\lambda}(u,v,t) \big| \\
&\leq \frac{1}{m_2^2}\Big( \big| f\widetilde{Q}(x,t,y)H(u,v,t) - f\widetilde{Q}(x,t,y)H(x,y,t)\big| \\
& \qquad + ~ \big|f\widetilde{Q}(x,t,y)H(x,y,t) - f\widetilde{Q}(u,t,v)H(x,y,t) \big|\Big) \\
&\leq \frac{1}{m_2^2} \Big( \|f\|_\infty\|\widetilde{Q}\|_{\infty} \big|H(u,v,t)-H(x,y,t)\big| + \|H\|_{\infty} \big|f\widetilde{Q}(x,t,y)-f\widetilde{Q}(u,t,v)\big|\Big) .
\end{align*}
$f\widetilde{Q}$ is Lipschitz in the light of Assumptions \ref{hyps2} and $H$ is Lipschitz by Lemma \ref{pdmp:lem:Hbounded}. Hence,
$$\big|\widetilde{\lambda}(x,y,t)-\widetilde{\lambda}(u,v,t)\big| \leq [\widetilde{\lambda}]_{Lip} d_2\big((x,y),(u,v)\big) ,$$
where $[\widetilde{\lambda}]_{Lip}$ is given by
$$ [\widetilde{\lambda}]_{Lip} = \frac{\|f\|_\infty\|\widetilde{Q}\|_{\infty}[H]_{Lip}+\|H\|_{\infty} [f\widetilde{Q}]_{Lip}}{m_2^2}.$$
This achieves the proof.
\end{proof}

		\begin{rem}
		Under the last point of Assumptions \ref{hyps1}, one may state a new property of the measure $\mu$. By $(\ref{minQ})$, we have for any $1\leq k\leq p$,
		$$ \overline{Q}(x,t,B_k) = \int_{B_k} \widetilde{Q}(x,t,y)\mu(\ud y) \geq m \mu(B_k) .$$
		Summing on $k$ yields to
		\begin{equation} \sum_{k=1}^p \mu(B_k) \leq \frac{1}{m} ,	\label{majMU} .\end{equation}
		\end{rem}
		
		\noindent
		In the following lemma, we establish that $\widetilde{l}$ is an approximation of $\widetilde{\lambda}$.
		
		\begin{lem1} \label{pdmp:approxtilde}
		Let $1\leq k\leq p$, $x\in A$ and $y\in B_k$. Let $0\leq t<t^\star(A)$. Then,
		\begin{equation}	\label{pdmp:link:lambdal}
		\big|\widetilde{\lambda}(x,y,t) - \widetilde{l}(A,B_k,t)\big| \leq [\widetilde{\lambda}]_{Lip} \text{\normalfont diam}_2~\! A\times B_1.
		\end{equation}
		\end{lem1}
		\begin{proof}
		By $(\ref{pdmp:def:ltilde})$, we have
		\begin{eqnarray*}
		\big|\widetilde{\lambda}(x,y,t) - \widetilde{l}(A,B_k,t) \big| &\leq & \Bigg|  \widetilde{\lambda}(x,y,t) -   \frac{ \int_{A\times B_k} \widetilde{\lambda}(u,v,t) \widetilde{G}(u,v,t) \widetilde{\nu}(\ud u\times\ud v)}{\int_{A\times B_k} \widetilde{G}(u,v,t) \widetilde{\nu}(\ud u\times \ud v)}  \Bigg| \\
		~&\leq & \frac{ \int_{A\times B_k} \big| \widetilde{\lambda}(x,y,t) - \widetilde{\lambda}(u,v,t)\big| \widetilde{G}(u,v,t) \widetilde{\nu}(\ud u\times\ud v)}{\int_{A\times B_k} \widetilde{G}(u,v,t) \widetilde{\nu}(\ud u\times \ud v)}  \\
		~ & \leq& [\widetilde{\lambda}]_{Lip}  \text{\normalfont diam}_2~\!A\times B_k,
		\end{eqnarray*}
		by virtue of Lemma \ref{pdmp:lem:lambdalip}. This yields to the expected result, since $B_1$ is assumed to be the set with the largest diameter.
		\end{proof} 

		\noindent
		Now, one may establish Proposition \ref{approxf}. The function $\widetilde{H}(A,B_k,\cdot)$ appearing in this result is defined for any $0\leq t<t^\star(A)$, by
		\begin{equation}
		\label{eq:def:Htilde}
		\widetilde{H}(A,B_k,t) = \frac{1}{\nu(A)} \int_{A\times B_k} H(x,y,t) \mu(\ud y)\nu(\ud x) ,
		\end{equation}
		where the function $H$ has been defined by $(\ref{pdmp:def:H})$.

		\noindent
		\textbf{Proof of Proposition \ref{approxf}.} First, we show that
		\begin{align}
		\frac{1}{\nu(A)} \Bigg|\int_A f(x,t)\nu(\ud x) -  \sum_{k=1}^p \widetilde{l}(A,B_k,t) \int_{A\times B_k}& H(x,y,t) \mu(\ud y)\nu(\ud x) \Bigg|
		\nonumber \\
		&\leq \frac{1}{m} [\widetilde{\lambda}]_{Lip} \|H\|_{\infty} \text{\normalfont diam}_2~\! A\times B_1 . \label{pdmp:ing}
		\end{align}
		Let $1\leq k\leq p$, $x\in A$ and $y\in B_k$. Multiplying $(\ref{pdmp:link:lambdal})$ by $H(x,y,t)$, we obtain
		$$ \Big| f\widetilde{Q}(x,t,y) - \widetilde{l}(A,B_k,t) H(x,y,t) \Big| \leq [\widetilde{\lambda}]_{Lip} \|H\|_{\infty} \text{diam}_2~\! A\times B_1 ,$$
		because $H$ is bounded on the strength of Lemma \ref{pdmp:lem:Hbounded}. We integrate on $B_k$ according to $\mu(\ud y)$. We obtain		$$ \left|f\overline{Q}(x,t,B_k) - \widetilde{l}(A,B_k,t) \int_{B_k} H(x,y,t)\mu(\ud y)\right| \leq \mu(B_k) [\widetilde{\lambda}]_{Lip} \|H\|_{\infty} \text{diam}_2~\! A\times B_1 .$$
		Summing on $k$ between $1$ and $p$ yields to
		$$ \left| f(x,t) - \sum_{k=1}^p \widetilde{l}(A,B_k,t) \int_{B_k} H(x,y,t)\mu(\ud y)\right| \leq \frac{1}{m} [\widetilde{\lambda}]_{Lip} \|H\|_{\infty} \text{diam}_2~\! A\times B_1 ,$$
		with $(\ref{majMU})$. Finally, we integrate on $A$ according to $\nu(\ud x)$.
		\begin{align}
		\Bigg|\int_A f(x,t)\nu(\ud x) - \sum_{k=1}^p\widetilde{l}(A,B_k,t)\int_{A\times B_k} &H(x,y,t) \nu(\ud x)\mu(\ud y)\Bigg| \nonumber \\
		&\leq \frac{\nu(A)}{m} [\widetilde{\lambda}]_{Lip} \|H\|_{\infty} \text{\normalfont diam}_2~\! A\times B_1 . \label{pdmp:interm1}
		\end{align}
		Doing the ratio of each term of $(\ref{pdmp:interm1})$ by $\nu(A)$ leads to $(\ref{pdmp:ing})$. On the other hand, we have
		\begin{equation} \label{pdmp:interm2}
		\Bigg| f(\xi,t) - \frac{\int_A f(x,t)\nu(\ud x)}{\nu(A)} \Bigg| \leq [f]_{Lip} \text{diam}~\!A ,
		\end{equation}		
		because $f$ is Lipschitz by Assumptions \ref{hyps2}. Finally, $(\ref{pdmp:ing})$ and $(\ref{pdmp:interm2})$ give the proof by virtue of the triangle inequality. \fin

\subsection{Estimation of $\widetilde{H}$}
	\label{sub:Htilde}

		Let $\xi\in A$ and $0\leq t<t^\star(A)$. For estimating $f(\xi,t)$, we need to estimate $\widetilde{l}(A,B_k,t)$ and $\widetilde{H}(A,B_k,t)$ for each $1\leq k\leq p$, according to Proposition \ref{approxf}. We shall see that this quantity may be seen as a conditional probability.

		\begin{prop1} \label{pdmp:prop:interp}
		Let $0\leq t<t^\star(A)$. Then,
		$$\int_{A\times B_k} H(x,y,t)\nu(\ud x)\mu(\ud y) = \prob_{\nu} (S_1 > t,Z_1\in B_k,Z_0\in A) .$$
		This immediately induces that
		$$\widetilde{H}(A,B_k,t)  =   \prob_{\nu}(S_1>t,Z_1\in B_k|Z_0\in A) .$$
		One may recall that $\nu(A)$ is a strictly positive number according to Remark \ref{pdmp:rem:nuA}.
		\end{prop1}
		\begin{proof}
		By $(\ref{pdmp:etalim})$ and Remark \ref{pdmp:rem:desint}, we have for any $t<t^\star(A)$,
		\begin{eqnarray}
		\prob_{\nu}(S_1 > t,Z_1\in B_k,Z_0\in A)  &=& \eta\big(A\times B_k\times [t,+\infty[\big) \label{pdmp:eta_interm01} \\
		~&=& \int_{A\times B_k} \widetilde{G}(x,y,t) \widetilde{\nu}(\ud x\times\ud y)  . \nonumber
		\end{eqnarray}
		Hence,
		$$\prob_{\nu}(S_1 > t,Z_1\in B_k,Z_0\in A) = \int_{A\times B_k} \widetilde{G}(x,y,t) \nu(\ud x) R(x,\ud y) ,$$
		by $(\ref{pdmp:tildenulim})$. Thus, by the expression of $R$ $(\ref{pdmp:exprR})$, the definition of $\widetilde{G}$ $(\ref{defG})$ and the definition of $H$ $(\ref{pdmp:def:H})$, we have
		$$
		\widetilde{G}(x,y,t) R(x,\ud y) = H(x,y,t) \mu(\ud y).
		$$
		Therefore,
		\begin{equation}	\label{pdmp:eta_interm02}
		\prob_{\nu}(S_1 > t,Z_1\in B_k,Z_0\in A) = \int_{A\times B_k} H(x,y,t) \nu(\ud x) \mu(\ud y) ,
		\end{equation}
		showing the result.
		\end{proof}

		\noindent
		Let $A\in\mathcal{C}_2$, $1\leq k\leq p$ and $0\leq t<t^\star(A)$. One may estimate the conditional probability
		$$\widetilde{H}(A,B_k,t)   =   \prob_{\nu}(S_1>t,Z_1\in B_k|Z_0\in A) ,$$
		by its empirical version given by $(\ref{pdmp:def:estpn})$. The uniform convergence of this estimator lies in Proposition \ref{unifps}. First, we establish the pointwise convergence.

		\begin{lem1}	\label{simpleps}
		For any $0\leq t<t^\star(A)$, $x\in E$, we have as $n$ goes to infinity,
		$$ \left| \widehat{p}_n(A,B_k,t)  -    \widetilde{H}(A,B_k,t)   \right|   \to 0 ~  \prob_{x}\textit{-a.s.}$$
		\end{lem1}
		
	\begin{proof}
	One applies the ergodic theorem to $(Z_n,Z_{n+1},S_{n+1})_{n\geq0}$. Thus, when $n$ goes to infinity,
	$$
	\frac{1}{n}\sum_{i=0}^{n-1}\mathbf{1}_{\{Z_i\in A\}}\mathbf{1}_{\{Z_{i+1}\in B_k\}}\mathbf{1}_{\{S_{i+1}>t\}} \to \eta\big(A\times B_k\times]t,+\infty[\big)~\prob_x\textit{-a.s.}
	$$
	Together with $(\ref{pdmp:eta_interm01})$ and $(\ref{pdmp:eta_interm02})$,
	\begin{equation}
	\frac{1}{n}\sum_{i=0}^{n-1}\mathbf{1}_{\{Z_i\in A\}}\mathbf{1}_{\{Z_{i+1}\in B_k\}}\mathbf{1}_{\{S_{i+1}>t\}}\to\int_{A\times B_k} H(x,y,t)\nu(\ud x)\mu(\ud y)~\prob_x\textit{-a.s.} \label{pdmp:proof:ergo1}
	\end{equation}
	In addition, by applying the ergodic theorem to the Markov chain $(Z_n)_{n\geq0}$, we have
	\begin{equation}
	\frac{1}{n}\sum_{i=0}^{n-1}\mathbf{1}_{\{Z_i\in A\}} \to \nu(A)~\prob_x\textit{-a.s.} \label{pdmp:proof:ergo2}
	\end{equation}
	Combining $(\ref{pdmp:def:estpn})$, $(\ref{pdmp:proof:ergo1})$ and $(\ref{pdmp:proof:ergo2})$, we show the expected result.
	\end{proof}

	\noindent
	Now, we can give the proof of Proposition \ref{unifps}, which states that $\widehat{p}_n(A,B_k,\cdot)$ is a consistent estimator.

	\noindent
	\textbf{Proof of Proposition \ref{unifps}.} Since $A$ and $k$ are fixed in the sequel, we will write $\widetilde{H}(t)$ instead of $\widetilde{H}(A,B_k,t)$ for the sake of readability. From Lemma \ref{simpleps}, for any $s$, $\prob_{x}(\Upsilon_s) = 1$ where the set $\Upsilon_s$ is defined by
	$$\Upsilon_s = \big\{\widehat{p}_n(A,B_k,s) \to \widetilde{H}(s) \big\} .$$ 	
	First, we prove that $\widetilde{H}$ is a strictly decreasing function. By $(\ref{pdmp:def:H})$ and by virtue of the theorem of derivation under the integral sign, $\widetilde{H}'$ satisfies,
	\begin{eqnarray*}
	\forall 0\leq s\leq t,~ \widetilde{H}'(s) &=& - \frac{1}{\nu(A)} \int_A  f(\xi,s)\overline{Q}(\xi,s,B_k) \nu(\ud\xi) \\
	~&\leq& -  \frac{\mu(B_k) m}{\nu(A)} \int_A f(\xi,s) \nu(\ud \xi) ,
	\end{eqnarray*}
	by $(\ref{minQ})$. According to Assumptions \ref{hyps2}, $f$ is strictly positive. Thus, $\int_A f(\xi,s)\nu(\ud\xi)>0$ because $\nu(A)>0$. As a consequence, $\widetilde{H}'(s)<0$ for any $s$, and $\widetilde{H}$ is, therefore, strictly decreasing. In particular, this is a one-to-one correspondence mapping from $[0,t]$ into $[a,b]$, where
	$$[a,b] = \big[\widetilde{H}(t), \prob_{\nu}(Z_1\in B_k| Z_0 \in A)\big],$$
	since, in the light of Proposition \ref{pdmp:prop:interp},
	$$\widetilde{H}(0)=\prob_\nu(Z_1\in B_k| Z_0 \in A) .$$
	For any couple $(l,m)$ of integers, with $0\leq l\leq m$, let us consider
	$$X(l,m) = \widetilde{H}^{-1}\left( a+\frac{(m-l)(b-a)}{m} \right) .$$
	By construction, we have for any $0\leq l\leq m-1$,
	$$ \widetilde{H}( X(l+1,m)) - \widetilde{H}( X(l,m)) = \frac{b-a}{m} .$$
	Hence, for $s\in[X(l,m) , X(l+1,m)[$ with $0\leq l\leq m-2$, or for $s\in[X(m-1,m),t]$, we have
	\begin{eqnarray}
	\widehat{p}_n(A,B_k,s) - \widetilde{H}(s) 		&\geq&	 \widehat{p}_n( A,B_k, X(l+1,m)) - 	\widetilde{H}(X(l,m)) \nonumber\\
	~								&\geq&	 \widehat{p}_n( A,B_k, X(l+1,m)) - 	\widetilde{H}(X(l+1,m)) + \frac{b-a}{m} , \label{majbetapn} \\
	\widehat{p}_n(A,B_k,s) - \widetilde{H}(s) 		&\leq&	 \widehat{p}_n( A,B_k, X(l,m) ) -		\widetilde{H}(X(l+1,m)) \nonumber \\
	~								&\leq& 	\widehat{p}_n( A,B_k,  X(l,m) ) - 	\widetilde{H}(X(l,m)) 	- \frac{b-a}{m} , \label{minbetapn}
	\end{eqnarray}
	because $\widetilde{H}$ and $\widehat{p}_n(A,B_k,\cdot)$ are decreasing functions. Let $0\leq s\leq t$. There are two possibilities: either $s\in[X(m-1,m),t]$, or there exists $0\leq l\leq m-2$ such that $s\in[X(l,m) , X(l+1,m)[$. Together with $(\ref{majbetapn})$ and $(\ref{minbetapn})$,
	$$ \sup_{0\leq s\leq t} \Big| \widehat{p}_n(A,B_k,s) - \widetilde{H}(s) \Big| \leq \alpha_{n,m} + \frac{b-a}{m},$$
	where
	$$ \alpha_{n,m} = \sup_{0\leq l\leq m} \Big| \widehat{p}_n(A,B_k,  X(l,m) )   -   \widetilde{H}(X(l,m)) \Big| .$$
	Let
	$$\Omega_m = \bigcap_{l=0}^{m} \Upsilon_{X(l,m)} \quad\text{and}\quad \Omega_{\infty} = \bigcap_{m\geq 2} \Omega_m .$$
	Consequently, when $n$ goes to infinity
	$$\forall\omega\in\Omega_m,~\alpha_{n,m}(\omega)\to 0 .$$
	In addition, $\prob_x(\Omega_m)=1$ because $\Omega_m$ is a finite intersection of sets with probability one. Finally,
	$$\limsup_{n\to+\infty} \sup_{0\leq s\leq t} \Big|\widehat{p}_n(A,B_k,s)-\widetilde{H}(s)\Big| \leq \inf_{m\geq 2} \frac{b-a}{m} ~ \prob_
{x}\textit{-a.s.,}
	$$
	since $\prob_{x}(\Omega_\infty)=1$ as a countable intersection of sets with probability one. \fin

\section{Numerical example}
\label{s:simu}

In this section, we present a short simulation study for illustrating the convergence result stated in Theorem \ref{pdmp:theo:fin}. We consider a PDMP $(X_t)_{t\geq0}$ defined on the state space $\mathcal{D}\times\mathcal{I}$, where
$$\mathcal{D}=\{x\in\mathbf{R}^2~:~\|x\|_2\leq 1\}\qquad\text{and}\qquad\mathcal{I}=]0,2\pi[.$$
In addition, we assume that the process starts from $X_0=(0,0,\pi)$. Let us consider $x=(x_1,x_2)\in\mathcal{D}$ and $\theta\in\mathcal{I}$. The flow $\Phi$ satisfies,
$$\forall t\geq0,~ \Phi( (x,\theta) ,t ) = \big(x_1+t\cos(\theta) , x_2+t\sin(\theta) , \theta\big).$$
The process $(X_t)_{t\geq0}$ has two components: intuitively, the first one represents the location in $\mathcal{D}$, while the second one models the direction of the deterministic motion which takes place in $\mathcal{D}$. The jump rate $\lambda$ is given by $\lambda((x,\theta))=5+\|x\|_2$. Since $\lambda$ does not depend on $\theta$, we will write $\lambda(x)$. Finally, the transition kernel $Q$ is defined for any $A\in\mathcal{B}(\mathbf{R}^2)$ and $B\in\mathcal{B}(\mathbf{R})$, by
$$Q\big((x,\theta),A\times B\big)=\frac{1}{K_x} \int_A\mathbf{1}_{\mathcal{D}}(y)\exp\left(-\frac{1}{2\sigma^2}\|y-x\|_2^2\right)\ud y ~\int_B \mathbf{1}_{\mathcal{I}}(u)\ud u ,$$
with $K_x$ as the normalizing constant and $\sigma^2$ as a parameter of variance. When a jump occurs, the new location on $\mathcal{D}$ is chosen according to a gaussian distribution centered in the previous position. The new angle is randomly chosen in $\mathcal{I}$.

This process may model the movement of a bacteria in a closed environment (see for instance \cite{MAL}). The bacteria moves on a line with a constant speed. It spontaneously and randomly changes its direction. During the rotation, the location may be a little modified (the parameter $\sigma^2$ must be chosen small, $\sigma^2=10^{-4}$ in our simulation study). Next, the bacteria moves again on a line. A change of direction occurs also when the bacteria tries to run through the boundary of its environment. In our model, the jump rate $\lambda$ depends only on the distance between the bacteria and the origin.

In the sequel, we focus our attention on the estimation of the conditional density $f(x_0,t)$ for $x_0=(0,0)$. We give an explicit formula of $f(x_0,t)$,
$$\forall t\geq0,~f(x_0,t) = (5+t)\exp\big( -t (5+t/2)\big).$$
For any $y\in E$ and $t$, we choose to approximate the jump rate $\widetilde{\lambda}(x_0,y,t)$ by $\widetilde{l}(A,B_k,t)$, with $A=]-\varepsilon,\varepsilon[^2$, $\varepsilon=0.1$, and $B_1=A$, $B_2=\mathcal{D}\setminus A$, that is, $(B_k)$ is the simplest partition of $\mathcal{D}$ that one may consider. In this context, $t^\star(A)=0.9$. Therefore, we decide to estimate $\widetilde{l}(A,B_k,t)$ by integrating between $0$ and $0.8$, which is less than $t^\star(A)$. Finally, we provide an estimate of $f$ within the interval $[0.05,0.75]$, which is a proper subset of $[0,0.8]$. 

We simulate a long trajectory of the process: the observation of $50000$ jumps is available to estimate $f$. The number of visits in $A$ is $5330$. In addition, the chosen bandwith $\beta_n(A,B_k)$ can be written in the following way,
$$\beta_n(A,B_k) = \frac{1}{h_n(A,B_k)^\alpha},$$
where $h_n(A,B_k)$ denotes the random number of visits in $A$ followed by a visit in $B_k$, and $\alpha=1/3$. Figure \ref{lab:fig1} is given to illustrate the good behavior of the estimator of $f$.

	\begin{figure}[!h]
	\centering
	\includegraphics[width=0.9\textwidth]{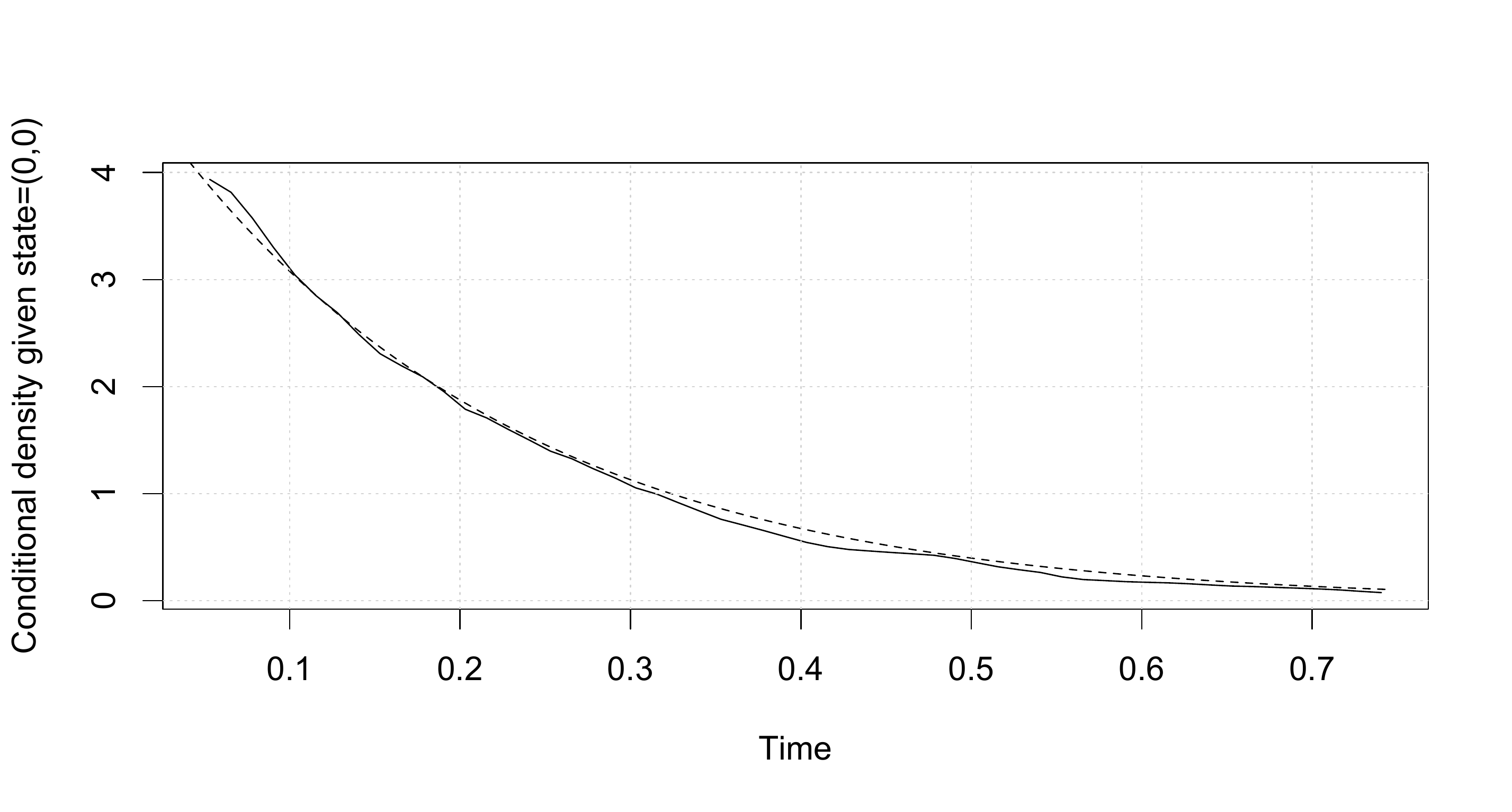}
	\caption{Estimation of the conditional density $f(x_0,t)$ with $x_0=(0,0)$ and $0.05\leq t\leq 0.75$. The estimate is drawn in solid line, while the exact density is in dashed line.}
	\label{lab:fig1}
	\end{figure}

\appendix
\section{Proof of Proposition \ref{mozart2}} \label{appendixA}

For the first conditional independence, let $h_1,\dots,h_n$ be some bounded measurable functions mapping from $\mathbf{R}_+$ into $\mathbf
{R}$. We have $\mathcal{G}_n \subset \mathcal{G}_n\vee\sigma(\delta_{n-1})$ and, by $(\ref{pdmp:dyn2})$, $h_n(S_n)$ is $\mathcal{G}_n\vee\sigma(\delta_{n-1})$-measurable. Thus,
\begin{align}
\esp_{\nu_0} & \big[ h_1(S_1)\dots h_n(S_n) |\mathcal{G}_n \big]  \nonumber \\
&= \esp_{\nu_0} \Big[ \esp_{\nu_0}\big[ h_1(S_1)\dots h_n(S_n) | \sigma(\delta_{n-1})\vee \mathcal{G}_n\big]   \big|    \widetilde{\mathcal
{G}}_n \Big] \nonumber \\
&= \esp_{\nu_0} \Big[   h_n(S_n)    \esp_{\nu_0}\big[ h_1(S_1)\dots h_{n-1}(S_{n-1}) | \sigma(\delta_{n-1})\vee\mathcal{G}_n\big]   \big|     
\mathcal{G}_n \Big]  . \label{pdmp:ind1}
\end{align}
Furthermore, always by $(\ref{pdmp:dyn2})$, $\sigma(\delta_{n-1})\vee\mathcal{G}_n \subset \sigma(\delta_{n-1})\vee\mathcal{G}_{n-1} \vee \sigma(\varepsilon_{n-1})$. Consequently,
\begin{align}
&\esp_{\nu_0}\big[ h_1(S_1)\dots h_{n-1}(S_{n-1}) | \sigma(\delta_{n-1})\vee\mathcal{G}_n\big] \nonumber\\
&=\esp_{\nu_0}\Big[  \esp_{\nu_0}\big[ h_1(S_1)\dots h_{n-1}(S_{n-1}) | \sigma(\delta_{n-1})\vee\mathcal{G}_{n-1} \vee \sigma(\varepsilon_{n-1}) 
\big]  \big|    \sigma(\delta_{n-1})\vee\mathcal{G}_n \Big] .	\label{pdmp:ind3}
\end{align}
Nevertheless, $h_1(S_1)\dots h_{n-1}(S_{n-1})$ is $\mathcal{G}_{n-1}\vee \sigma(\delta_0,\dots,\delta_{n-2})$-measurable, and moreover,
$$ \sigma(\varepsilon_{n-1})\vee\sigma(\delta_{n-1}) ~\bot ~ \mathcal{G}_{n-1}~\! \vee ~\! \sigma(\delta_0 , \dots, \delta_{n-2}) .$$
Together with $(3)$ \cite[page 308]{Chung},
$$\esp_{\nu_0}\big[ h_1(S_1)\dots h_{n-1}(S_{n-1}) | \sigma(\delta_{n-1})\vee\mathcal{G}_{n-1} \vee \sigma(\varepsilon_{n-1}) \big] = \esp_
{\nu_0}\big[ h_1(S_1)\dots h_{n-1}(S_{n-1}) | \mathcal{G}_{n-1} \big] .$$
Finally, with $(\ref{pdmp:ind3})$,
$$
\esp_{\nu_0}\big[ h_1(S_1)\dots h_{n-1}(S_{n-1}) | \sigma(\delta_{n-1})\vee\mathcal{G}_n\big]  = \esp_{\nu_0}\big[ h_1(S_1)\dots h_{n-1}(S_
{n-1}) | \mathcal{G}_{n-1} \big]  .
$$
Thus, by $(\ref{pdmp:ind1})$,
$$\esp_{\nu_0}  \big[ h_1(S_1)\dots h_n(S_n) | \mathcal{G}_n \big] =  \esp_{\nu_0}\big[ h_1(S_1)\dots h_{n-1}(S_{n-1}) | \widetilde{\mathcal
{G}}_{n-1} \big]   \esp_{\nu_0}[ h_n(S_n) | \mathcal{G}_n] .$$
Therefore, by a straightforward induction, we have
$$  \esp_{\nu_0}  \big[ h_1(S_1)\dots h_n(S_n) | \mathcal{G}_n \big]    =   \prod_{i=1}^n \esp_{\nu_0} \big[h_i(S_i) | \mathcal{G}_i\big] .$$
Taking for $j\neq i$, $h_j=\mathbf{1}$, leads to
\begin{equation} \label{indep04}
\esp_{\nu_0} \big[h_i(S_i) | \mathcal{G}_n\big] = \esp_{\nu_0} \big[h_i(S_i) | \mathcal{G}_i\big]  .
\end{equation}
Hence,
$$  \esp_{\nu_0}  \big[ h_1(S_1)\dots h_n(S_n) | \mathcal{G}_n \big]    =   \prod_{i=1}^n \esp_{\nu_0} \big[h_i(S_i) | \mathcal{G}_n
\big] .$$
This shows that
$$ \bigvee_{j\neq i} \sigma(S_j) ~ \underset{\mathcal{G}_n}{\bot} ~ \sigma(S_i) ,$$
and this directly induces the expected result. For the second conditional independence, let now $h_1:\mathbf{R}_+\to\mathbf{R}$ and $h_2:E^{n+1}\to\mathbf{R}$ be some bounded measurable functions. $\sigma(Z_{i-1},Z_i)\subset\mathcal{G}_n$, thus,
\begin{align}
\mathbf{E}_{\nu_0} & \big[ h_1(S_i) h_2(Z_0,\dots,Z_{n}) | \sigma(Z_{i-1},Z_i) \big] 	\nonumber\\
&= \mathbf{E}_{\nu_0} \Big[  \mathbf{E}_{\nu_0}\big[ h_1(S_i) h_2(Z_0,\dots,Z_{n})| \mathcal{G}_n \big] \big| \sigma(Z_{i-1},Z_i) \Big]  \nonumber \\
&= \mathbf{E}_{\nu_0} \Big[ h_2(Z_0,\dots,Z_{n})  \mathbf{E}_{\nu_0} \big[h_1(S_i)    	| \mathcal{G}_n	\big]   \big| \sigma(Z_{i-1},Z_i) 
\Big] . \label{pdmp:ind2}
\end{align}
We shall prove that
\begin{equation}	\label{indep05}
\mathbf{E}_{\nu_0} \big[h_1(S_i)    	| \mathcal{G}_n	\big] = \mathbf{E}_{\nu_0} \big[h_1(S_i)    	|\sigma(Z_{i-1},Z_i)\big] .
\end{equation}
By $(\ref{indep04})$, we have
$$ \mathbf{E}_{\nu_0} \big[h_1(S_i)    	| \mathcal{G}_n	\big] = \mathbf{E}_{\nu_0} \big[h_1(S_i)    	|\sigma(Z_0,\dots,Z_i)\big] .$$
Therefore, in order to state $(\ref{indep05})$, we have to prove that
\begin{equation} \label{indep06}
\sigma(S_i) ~\underset{\sigma(Z_{i-1},Z_i)}{\bot}~\sigma(Z_0,\dots,Z_{i-2}) .
\end{equation}
From the dynamic $(\ref{pdmp:dyn2})$, we have
$$\sigma(\delta_{i-1},\varepsilon_{i-1}) ~\bot~ \sigma(Z_0,\dots,Z_{i-2})\vee\sigma(Z_{i-1}) .$$
Thus, in the light of Proposition 6.8 of \cite{Kal} in the direction $\Rightarrow$, we have
\begin{equation}\label{indep07}
\sigma(\delta_{i-1},\varepsilon_{i-1}) ~\underset{\sigma(Z_{i-1})}{\bot}~\sigma(Z_0,\dots,Z_{i-2}) .
\end{equation}
Furthermore, it is easy to see that
\begin{equation}
\label{indep08}
\sigma(Z_0,\dots,Z_{i-2}) \underset{\sigma(Z_{i-1})}{\bot}~\sigma(Z_{i-1}) .
\end{equation}
Therefore, with $(\ref{indep07})$, $(\ref{indep08})$ and Proposition 6.8 of \cite{Kal} again, but in the direction $\Leftarrow$, we have
$$\sigma(Z_{i-1},\delta_{i-1},\varepsilon_{i-1}) \underset{\sigma(Z_{i-1})}{\bot}~\sigma(Z_0,\dots,Z_{i-2}) .$$
Furthermore, we have the equality of the $\sigma$-fields
$$ \sigma(Z_{i-1},Z_i,\delta_{i-1},\varepsilon_{i-1}) = \sigma(Z_{i-1},\delta_{i-1},\varepsilon_{i-1}) ,$$
since $Z_i$ is generated by $Z_{i-1}$ and the random errors $\delta_{i-1}$ and $\varepsilon_{i-1}$. Thus,
$$\sigma(Z_i)\vee\sigma(Z_{i-1},\delta_{i-1},\varepsilon_{i-1}) \underset{\sigma(Z_{i-1})}{\bot}~\sigma(Z_0,\dots,Z_{i-2}) .$$
Thus, on the strength of Proposition 6.8 of \cite{Kal} again, in the direction $\Rightarrow$, we have
$$\sigma(Z_{i-1},\delta_{i-1},\varepsilon_{i-1}) \underset{\sigma(Z_{i-1},Z_i)}{\bot}~\sigma(Z_0,\dots,Z_{i-2}) .$$
This states $(\ref{indep06})$ since the time $S_i$ is generated from $Z_{i-1}$ and the error $\delta_{i-1}$. Therefore, we prove $(\ref{indep05})$. As a consequence, plugging $(\ref{pdmp:ind2})$ and $(\ref{indep05})$ yields to
\begin{align*}
\mathbf{E}_{\nu_0}  \big[ h_1(S_i) h_2(Z_0,\dots&,Z_{n}) | \sigma(Z_{i-1},Z_i) \big]  \\
&=~ \mathbf{E}_{\nu_0} \big[ h_1(S_i) | \sigma(Z_{i-1},Z_i) \big] \mathbf{E}_{\nu_0} \big[h_2(Z_0,\dots,Z_{n}) | \sigma(Z_{i-1},Z_i) \big] ,
\end{align*}
showing the expected result. \fin


\nocite{*}
\bibliographystyle{acm}
\bibliography{pdmp_main} 

\begin{thebibliography}{10}

\bibitem{AalPHD}
{\sc Aalen, O.~O.}
\newblock {\em Statistical inference for a family of counting processes}.
\newblock ProQuest LLC, Ann Arbor, MI, 1975.
\newblock Thesis (Ph.D.)--University of California, Berkeley.

\bibitem{Aal77}
{\sc Aalen, O.~O.}
\newblock Weak convergence of stochastic integrals related to counting
  processes.
\newblock {\em Z. Wahrscheinlichkeitstheorie und Verw. Gebiete 38}, 4 (1977),
  261--277.

\bibitem{Aal78}
{\sc Aalen, O.~O.}
\newblock Nonparametric inference for a family of counting processes.
\newblock {\em Ann. Statist. 6}, 4 (1978), 701--726.

\bibitem{AalenHistory}
{\sc Aalen, O.~O., Andersen, P.~K., Borgan, {\O}., Gill, R.~D., and Keiding,
  N.}
\newblock History of applications of martingales in survival analysis.
\newblock {\em J. \'Electron. Hist. Probab. Stat. 5}, 1 (2009), 28.

\bibitem{And}
{\sc Andersen, P.~K., Borgan, {\O}., Gill, R.~D., and Keiding, N.}
\newblock {\em Statistical models based on counting processes}.
\newblock Springer Series in Statistics. Springer-Verlag, New York, 1993.

\bibitem{MR1679540}
{\sc Aven, T., and Jensen, U.}
\newblock {\em Stochastic models in reliability}, vol.~41 of {\em Applications
  of Mathematics (New York)}.
\newblock Springer-Verlag, New York, 1999.

\bibitem{Az12a}
{\sc Aza{\"{\i}}s, R., Dufour, F., and G{\'e}gout-Petit, A.}
\newblock Nonparametric estimation of the jump rate for non-homogeneous marked
  renewal processes.
\newblock {\em Preprint arXiv:1202.2211, Accepted for publication in Annales de
  l'Institut Henri Poincar{\'e}\/} (2012).

\bibitem{BERAN}
{\sc Beran, J.}
\newblock Nonparametric regression with randomly censored survival data, 1981.
\newblock Technical report, Dept. Statist. Univ. California, Berkeley.

\bibitem{MR2528336}
{\sc Chiquet, J., and Limnios, N.}
\newblock A method to compute the transition function of a piecewise
  deterministic {M}arkov process with application to reliability.
\newblock {\em Statist. Probab. Lett. 78}, 12 (2008), 1397--1403.

\bibitem{Chung}
{\sc Chung, K.~L.}
\newblock {\em A course in probability theory}, second~ed.
\newblock Academic Press, New York-London, 1974.
\newblock Probability and Mathematical Statistics, Vol. 21.

\bibitem{COMTE}
{\sc Comte, F., Gaïffas, S., and Guilloux, A.}
\newblock Adaptive estimation of the conditional intensity of marker-dependent
  counting processes.
\newblock {\em Ann. Inst. H. Poincaré Probab. Statist. 47}, 4 (2011),
  1171--1196.

\bibitem{Cox72}
{\sc Cox, D.~R.}
\newblock Regression models and life-tables.
\newblock {\em J. Roy. Statist. Soc. Ser. B 34\/} (1972), 187--220.

\bibitem{MR932943}
{\sc Dabrowska, D.~M.}
\newblock Nonparametric regression with censored survival time data.
\newblock {\em Scand. J. Statist. 14}, 3 (1987), 181--197.

\bibitem{Dav2}
{\sc Davis, M., Dempster, M., Sethi, S., and Vermes, D.}
\newblock Optimal capacity expansion under uncertainty.
\newblock {\em Advances in Applied Probability 19}, 1 (1987), 156--176.

\bibitem{Dav}
{\sc Davis, M. H.~A.}
\newblock {\em Markov models and optimization}, vol.~49 of {\em Monographs on
  Statistics and Applied Probability}.
\newblock Chapman \& Hall, London, 1993.

\bibitem{DeS}
{\sc de~Saporta, B., Dufour, F., Zhang, H., and Elegbede, C.}
\newblock Optimal stopping for the predictive maintenance of a structure
  subject to corrosion.
\newblock {\em Journal of Risk and Reliability 226 (2)\/} (2012), 169--181.

\bibitem{MAL}
{\sc Fontbona, J., Guérin, H., and Malrieu, F.}
\newblock Quantitative estimates for the long time behavior of a {PDMP}
  describing the movement of bacteria.
\newblock {\em Preprint\/} (2010).

\bibitem{HLL}
{\sc Hern{\'a}ndez-Lerma, O., and Lasserre, J.~B.}
\newblock {\em Markov chains and invariant probabilities}, vol.~211 of {\em
  Progress in Mathematics}.
\newblock Birkh\"auser Verlag, Basel, 2003.

\bibitem{subtilin}
{\sc Hu, J., Wu, W., and Sastry, S.}
\newblock {\em Modeling subtilin production in {\normalfont bacillus subtilis}
  using stochastic hybrid systems}.
\newblock In R. Alur and G.J. Pappas, editors, {\textit{Hybrid Systems:
  Computation and Control}}, number 2993 in LNCS, Springer-Verlag, Berlin,
  2004.

\bibitem{MR676128}
{\sc Jacobsen, M.}
\newblock {\em Statistical analysis of counting processes}, vol.~12 of {\em
  Lecture Notes in Statistics}.
\newblock Springer-Verlag, New York, 1982.

\bibitem{Jacobsen}
{\sc Jacobsen, M.}
\newblock {\em Point Process Theory and Applications: Marked Point and
  Piecewise Deterministic Processes}.
\newblock Probability and its Applications. Birkhäuser, Boston-Basel-Berlin,
  2006.

\bibitem{Kal}
{\sc Kallenberg, O.}
\newblock {\em Foundations of modern probability}, second~ed.
\newblock Probability and its Applications (New York). Springer-Verlag, New
  York, 2002.

\bibitem{MR1345201}
{\sc Li, G., and Doss, H.}
\newblock An approach to nonparametric regression for life history data using
  local linear fitting.
\newblock {\em Ann. Statist. 23}, 3 (1995), 787--823.

\bibitem{Martinussen2006}
{\sc Martinussen, T., and Scheike, T.~H.}
\newblock {\em Dynamic regression models for survival data}.
\newblock Statistics for Biology and Health. Springer, New York, 2006.

\bibitem{McK90}
{\sc McKeague, I.~W., and Utikal, K.~J.}
\newblock Inference for a nonlinear counting process regression model.
\newblock {\em Ann. Statist. 18}, 3 (1990), 1172--1187.

\bibitem{MandT}
{\sc Meyn, S., and Tweedie, R.~L.}
\newblock {\em Markov chains and stochastic stability}, second~ed.
\newblock Cambridge University Press, Cambridge, 2009.

\bibitem{Ouv}
{\sc Ouvrard, J.-Y.}
\newblock {\em Probabilités 2}.
\newblock Deuxième édition, Master, Agrégation. Cassini, 2004.

\bibitem{Ram83}
{\sc Ramlau-Hansen, H.}
\newblock Smoothing counting process intensities by means of kernel functions.
\newblock {\em Ann. Statist. 11}, 2 (1983), 453--466.

\bibitem{MR840519}
{\sc Stute, W.}
\newblock Conditional empirical processes.
\newblock {\em Ann. Statist. 14}, 2 (1986), 638--647.

\bibitem{MR1208878}
{\sc Utikal, K.~J.}
\newblock Nonparametric inference for a doubly stochastic {P}oisson process.
\newblock {\em Stochastic Process. Appl. 45}, 2 (1993), 331--349.

\bibitem{MR1445041}
{\sc Utikal, K.~J.}
\newblock Nonparametric inference for {M}arkovian interval processes.
\newblock {\em Stochastic Process. Appl. 67}, 1 (1997), 1--23.

\end{thebibliography}

\end{document}